\newtheorem{theorem}{Theorem}[section]
\newtheorem{corollary}[theorem]{Corollary}
\newtheorem{lemma}[theorem]{Lemma}
\newtheorem{proposition}[theorem]{Proposition}
\theoremstyle{definition}
\newtheorem{definition}[theorem]{Definition}
\theoremstyle{remark}
\newcommand\RR{\mathbb{R}}
\newcommand\al{\alpha}
\newcommand\gm{\gamma}
\newcommand\lm{\lambda}
\renewcommand\th{\theta}
\newcommand\la{\langle}
\newcommand\ra{\rangle}
\newcommand\srg{\mathrm{srg}}
\providecommand{\keywords}[1]
{
    \small
    \textbf{Keywords:} #1
}
\newcommand\Aut{\operatorname{Aut}}
\newcommand\proj{\operatorname{proj}}
\newcommand\Spec{\operatorname{Spec}}
\title{Strongly regular graphs with parameters $(85,14,3,2)$ do not exist}
\author{Sergey Shpectorov and Tianxiao Zhao}
\date{\today}
\begin{document}

\maketitle

\abstract{We investigate the second smallest unresolved feasible set of parameters of 
strongly regular graphs, $(v,k,\lambda,\mu)=(85,14,3,2)$. Using the classification of 
cubic graphs of small degree, we restrict possible local structure of such a graph $G$. 
After that, we exhaustively enumerate possible neighbourhoods of a maximal $3$-clique 
of $G$ and check them against a variety of conditions, including the combinatorial 
ones, coming from $\lm=3$ and $\mu=2$, as well as the linear algebra ones, utilising 
the Euclidean representation of $G$. These conditions yield contradiction in all cases, 
and hence, no $\srg(85,14,3,2)$ exists.}

\bigskip\noindent
\keywords{strongly regular graph, euclidean representation}

\section{Introduction}

In this paper we consider undirected graphs without loops and multiple edges. A 
\emph{strongly regular graph} is a connected regular graph $G$ such that the number 
of common neighbours of two distinct vertices $u,v\in G$ depends only on whether or not 
$u$ and $v$ are adjacent. Four parameters are used to describe the properties of a 
strongly regular graph $G$: the number of vertices, $v$, the valency, $k$, the number 
of neighbours of two adjacent vertices, $\lambda$, and the number of neighbours of 
two non-adjacent vertices, $\mu$. We will use the notation $\srg(v,k,\lm,\mu)$ for any 
strongly regular graphs with these parameters.

The four parameters are not independent; in fact, they satisfy several \emph{feasibility 
conditions}. Parameters satisfying these conditions are called \emph{feasible}. Given a 
feasible parameter set $(v,k,\lm,\mu)$, one can ask whether there is a strongly regular 
graph with such parameters, and if so, how many different $\srg(v,k,\lm,\mu)$ are there 
up to isomorphism. 

The answer to this question varies for different parameter sets. For some feasible 
parameters, the corresponding strongly regular graphs exist and occasionally there may be 
a significant number of non-isomorphic graphs with the same parameters. For other 
feasible parameter sets, no $\srg(v,k,\lm,\mu)$ exist. In other words, feasibility of 
parameters does not guarantee the existence of a strongly regular graph. All feasible 
parameter sets with $v\leq 1300$ are listed in an online catalogue \cite{BrouwerTable} 
maintained by Brouwer. For each parameter set, the catalogue lists the key properties 
of the graph and additional comments describing what is currently known about this case.

We note that the complement graph of a strongly regular graph is also strongly regular, 
as long as it is connected. Hence, strongly regular graphs and their parameter sets 
normally come in pairs. For $v\le 100$, there are only nine unresolved cases, where it 
is not known whether the strongly regular graphs with the given feasible parameter set 
exist. The smallest case is for $v=69$ and the next three unresolved cases have $v=85$. 
In this paper, we resolve one of these three cases. Namely, we prove the following result.

\begin{theorem} \label{mainthm}
There is no strongly regular graph with parameters $(85,14,3,2)$.
\end{theorem}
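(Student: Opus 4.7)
My plan is to combine a restricted enumeration of possible local graphs with an exhaustive case analysis around a maximal $3$-clique, filtered through both combinatorial and spectral conditions. First I would record the basic numerics: the quadratic $x^2 - (\lm - \mu)x - (k - \mu) = x^2 - x - 12 = 0$ yields the non-principal eigenvalues $4$ and $-3$ with multiplicities $34$ and $50$. Since $\lm = 3$, the local graph $L_v := G[N(v)]$ at every vertex $v$ is a cubic graph on $14$ vertices. The first step is to restrict which such cubic graphs can actually occur as an $L_v$: one uses the existing classification of cubic graphs of small order together with constraints imposed by $\mu = 2$. For instance, for non-adjacent $u, w \in L_v$ we have $|N_G(u) \cap N_G(w)| = 2$ with $v$ as one of those two common neighbours, so splitting the remaining common neighbour between inside and outside $L_v$ forces strong numerical conditions on $L_v$, eliminating most candidates.

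Next, I would fix a maximal $3$-clique $T = \{a,b,c\}$. Each pair in $T$ has, besides the third vertex, exactly two further common neighbours, giving six pair-common vertices at distance $1$ from $T$. Maximality forces these six to be pairwise distinct: a vertex common to two such pairs would be adjacent to all of $a,b,c$ and extend $T$ to a $K_4$. After removing $T$ and the pair-common neighbours, each of $a,b,c$ has $14-2-2-2 = 8$ further \emph{private} neighbours, so $|T \cup N(T)| = 33$, partitioned into three clean types. The induced adjacencies within each of $N(a), N(b), N(c)$ are prescribed by the local graph at that vertex, and after having restricted the admissible local graphs in the previous step, I would enumerate exhaustively all consistent ways to glue three local graphs around $T$.

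Each resulting configuration is then subjected to two kinds of tests. Combinatorially, $\mu = 2$ demands that every vertex $w$ at distance $2$ from some $x \in T$ satisfy $|N(w) \cap N(x)| = 2$, which strongly restricts how edges leave the first neighbourhood of $T$. Linear-algebraically, I would invoke the Euclidean representation of $G$: mapping each vertex to a unit vector in the $34$-dimensional $4$-eigenspace (or dually in the $50$-dimensional $(-3)$-eigenspace) with inner products determined by adjacency forces the Gram matrix of any vertex subset to be positive semidefinite of rank at most $34$. Together these conditions should eliminate every surviving configuration, completing the proof.

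The main obstacle will be the combinatorial explosion. Even after the local graph has been substantially cut down, there are many ways to paste three copies around a maximal triangle, and in the most symmetric configurations neither the raw $\mu$-count nor a naive positive-semidefiniteness check immediately fails. For those hardest cases, I expect one must single out a carefully chosen induced substructure---$T$ together with specific vertices at distance $1$ and $2$---whose Gram matrix eigenvalues, rank, or signature then delivers the contradiction. Identifying such substructures in a uniform way, so that every remaining configuration falls to the same sort of spectral computation, is the delicate step.
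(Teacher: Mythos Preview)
Your overall strategy matches the paper's: restrict the admissible local cubic graphs using $\mu=2$, fix a maximal $3$-clique $Q$, enumerate all ways of gluing three local graphs around it, and eliminate configurations via a combination of $\mu$-counting and positive semidefiniteness of Gram matrices in the $34$-dimensional eigenspace. Your vertex count ($33$ vertices in $Q\cup N(Q)$, partitioned as $3+6+24$) and your decomposition into pair-common and private neighbours coincide with the paper's handles and segment interiors.

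Where your proposal remains too coarse is exactly where the paper invests most of its effort. First, you do not identify the structure of the cross-edges between the private neighbour sets of different clique vertices. The paper proves (its Lemma~\ref{matching}) that the extra edges between two segments form a \emph{perfect matching} between explicitly defined cores of size $4$ or $6$; this collapses the gluing problem from an arbitrary bipartite graph to a single element of $S_4$ or $S_6$ per handle and is what makes ``enumerate exhaustively all consistent ways to glue'' tractable at all. Second, your plan for the hard survivors---``single out a carefully chosen induced substructure''---is the right instinct but far too vague. The paper's concrete realisation is to add the eight remaining neighbours of a fixed handle vertex $t\in S_y\cap S_z$, pushing the total to $38>34$ so that the rank bound has force, and then to run a further two-level recursion (its Steps~3 and~4) to determine those neighbours via their footprints in $T$ and finally the edges among them. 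Even with all of this structure in place the paper reports over a year of computation on $96$ cores, so the missing lemmas are not cosmetic: they are precisely what separates your plausible outline from an executable proof.
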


The complementary array is $(85,70,57,60)$ and, clearly, such strongly regular graphs 
also cannot exist. Previous knowledge about $\srg(85,14,3,2)$ was very limited. 
Paduchikh in \cite{Paduchikh+2009+89+111} investigated how automorphisms of prime order 
could act on such a graph and what would be the fixed subgraph of such an automorphism.

There are several ingredients to our proof of Theorem \ref{mainthm}. First of all, each 
\emph{local subgraph} $G_1(x)$ (i.e., subgraph induced on the neighbourhood of a vertex 
$x$) of $G=\srg(85,14,3,2)$ is a cubic graph on $14$ vertices. Connected cubic graphs 
on at most $14$ vertices have been completely enumerated, see e.g., \cite{MR424607,
BUSSEMAKE1977234}. Going through the list, we determine all possible local graphs, 
using the additional strong condition that two non-adjacent vertices in the local graph 
can have at most one common neighbour. This holds since $\mu=2$. The final list of 
possible local graphs, called \emph{good graphs} below, includes $36$ connected graphs 
and $3$ disconnected graphs, consisting of two components, a $4$-clique and a connected 
cubic graph on $10$ vertices.

One immediate corollary of this is that $G$ contains maximal $3$-cliques. We select one 
such clique, $Q=\{x,y,z\}$, and we do in the computer algebra system GAP \cite{GAP4} a 
complete enumeration of possible subgraphs arising on the set $T$ of vertices adjacent 
to $Q$. Since $|T|=30$, this enumeration is huge and it cannot be done carelessly. We 
represent $T$ as a union of three $12$-vertex \emph{segments}, $S_x=G_1(x)\setminus Q$, 
$S_y=G_1(y)\setminus Q$ and $S_z=G_1(z)\setminus Q$, corresponding to the three vertices 
in $Q$. Each segment is a subgraph of a local graph, and so possible segments can be 
enumerated up to isomorphism, giving us a total of $478$ possible segments. Within $T$, 
two segments intersect in a $2$-vertex set called a \emph{handle} (see Section \ref{segs} 
for all relevant definitions and discussion). A handle can be an edge or a non-edge and 
this leads to a compatibility condition for pairs of segments within $T$. We pre-compute 
the list of $86333$ (ordered) pairs of compatible segments $S_x\cup S_y$ joined at a 
handle, up to isomorphism. These pairs give us the cases into which we split the entire 
calculation. In each case, the calculation goes through four steps: 
\begin{itemize}
\item Step 1: Enumerating possible graph structures on $S_x\cup S_y$.
\item Step 2: Gluing in a third segment $S_z$ in all possible compatible ways and 
enumerating all graph structures on $T=S_x\cup S_y\cup S_z$. A great majority of cases 
are eliminated at this step.
\item Step 3: For each $T$ not eliminated at Step 2, enumerate all possible sets $C$ of 
additional neighbours of a fixed vertex $t\in X=S_y\cap S_z$.
\item Step 4: Enumerate all possible graph structures on $C$ and hence on the entire
$T\cup C$, achieving elimination in all cases.
\end{itemize}

In addition to purely combinatorial arguments involving the parameters of $G$, we rely 
for elimination on the linear algebra conditions coming from the \emph{Euclidean 
representation} of $G$. It is well-known that a strongly regular graph can be realised 
as a set of unit vectors in an eigenspace of its adjacency matrix. The value of the dot 
product of two unit vectors corresponding to vertices $u$ and $v$ is given in the 
so-called \emph{cosine sequence} of $G$, and it depends only on the distance between $u$ 
and $v$ in $G$. The eigenvalues of the adjacency matrix of $G$ and their multiplicities 
can be computed from the parameters of $G$. For $G=\srg(85,14,3,2)$, the adjacency 
matrix has eigenvalues $k=14$, $4$, and $-3$, with respective multiplicities $1$, $34$, 
and $50$. For our calculation, we selected the unit vector realisation in the eigenspace 
$E$ of dimension $34$ corresponding to the eigenvalue $4$. The cosine sequence for this 
Euclidean representation is $w_0=1$, $w_1=\frac{2}{7}$, and $w_2=-\frac{1}{14}$. That 
is, adjacent vertices lead to the dot product value $\frac{2}{7}$ and the non-adjacent 
vertices lead to the value $-\frac{1}{14}$. 

Once we know all edges on a subset $X$ of $G$, such as, say, $T$ or $T\cup C$ (or a 
subset of these sets), we can create the Gram matrix corresponding to $X$. Since the dot 
product is positive definite on $E$, the Gram matrix of every subset of $G$ must be 
semi-positive definite, i.e., it cannot have negative eigenvalues. The strength of this 
linear algebra condition grows with the size of the subset $X$ we consider. For example, 
none of the possible sets $S_x\cup S_y$ (of cardinality $22$) is eliminated by this 
condition. However, for $T=S_x\cup S_y\cup S_z$ (whose size is $30$), this condition is 
very powerful and it eliminates a large majority of all possible configurations of edges, 
depending on the specific case. Clearly, we want to eliminate each configuration as early 
as possible, so we build up $T$ vertex by vertex and check semi-positive definiteness 
along the way.

Also, note that the rank of the Gram matrix cannot exceed $\dim E=34$. Hence, at Step 4, 
where the size of $T\cup C$ grows to $38>34$, the rank consideration can be applied with 
a devastating effect, eventually eliminating all configurations.

While the overall idea of our four-step enumeration looks quite simple and 
straightforward, it was found as a result of much experimentation. Even with our approach 
above, the total enumeration involves an astronomical number of possible configurations, 
and so the enumeration would not have been possible without a very effective code in GAP. 
Because of this, we also devote much attention below to the exact algorithmic details, 
including some key data structures and even some code. We do not include the entire code 
we created and used, due to its length, but it is available on GitHub \cite{code}.

\medskip
Finally, let us describe the contents of the paper section by section. In Section 2, we 
provide the background information on strongly regular graphs and their Euclidean 
representations. In Section 3, we identify the possible local subgraphs of 
$G=\srg(85,14,3,2)$, starting from the known lists of cubic graphs of small size. In 
Section 4, we discuss the concepts of segments and handles, which are the building blocks 
for our set $T$. We determine the complete list of possible segments and classify them 
according to their type. Gluing of segments over the common handles is described in 
Section 5, where we develop the group-theoretic methods allowing us to avoid repetitions. 
This leads to finding the list of $86333$ pairs of segments glued over a common handle, 
which constitute the cases into which we split the calculation. We also describe in this 
section the main idea of our approach, focussing on the set $T=S_x\cup S_y\cup S_z$. Not 
every edge in $T$ is contained in one of the three segments. Hence, we study the edges 
that cut across segments, and we show that such edges form a matching between the subsets 
in the two segments, called the \emph{cores}. This results informs the enumeration method 
we select, using \emph{enumeration trees}. In Section 6, we provide some data and 
algorithmic details: ordering of the vertices in a segment and the resulting quad type, 
which accomplishes a finer classification of segments. We also discuss in this section 
the overall organisation of Steps 1 and 2. 

By the end of Step 2, we have already eliminated a large majority of all cases. 
However, even a tiny percentage of survivors, leads to a very large number of cases, 
for which we need to do further steps. In Section 7, we describe properties of the 
additional vertices we add, namely to the additional neighbours of a vertex $t\in 
X=S_y\cap S_z$. We identify each additional vertex with its set of neighbours in $T$, 
discuss compatibility of additional vertices, and provide the details of the recursive 
enumeration we do at Step 3. In Section 8, we similarly discuss the details of Step 4, 
where we have in hand a complete set $C$ of neighbours of $t$ and we enumerate the 
possible edges on $C$ and achieve the final elimination. In Section 9, we describe an 
additional idea, based on the careful selection of the $3$-clique $Q=\{x,y,x\}$, 
identifying our set $T$. This additional idea leads to a significant overall reduction 
in the number of cases we need to consider, and it also leads to a more simple algorithm. 
Section 10 contains brief concluding remarks. The paper has three appendices. Appendix A 
describes the enumeration trees we use at Steps 1 and 2. Appendix B deals with the 
details of the LDLT algorithm we use to verify semi-positive definiteness as we add 
vertices one by one. Finally, in Appendix C we describe and justify the method we use to 
compute projections to $E$, which we use extensively at Steps 3 and 4. 

\medskip
The enumeration was carried out in parallel on $96$ cores in four servers in the School 
of Mathematics at the University of Birmingham. We especially thank David Craven, who 
manages these servers and who tolerated our $96$ copies of GAP, running continuously 
from November 2023 till January 2025.

\section{Preliminaries} 

\subsection{Graphs}

In this section we will briefly introduce some background concepts and facts.

By a graph we mean a simple graph, without loops or multiple edges. We will identify a graph 
$G$ with its vertex set and will use $E(G)$ to denote its edge set. We will only deal with 
finite graphs, i.e., $|G|$ (and hence also $|E(G)|$) will be finite. We call $v=|G|$ the 
\emph{order} of the graph $G$.

For $x,y\in G$, we write $x\sim y$ (respectively, $x\not\sim y$) to indicate that $x$ and 
$y$ are adjacent (respectively, non-adjacent). A \emph{path} from $x$ to $y$ is a sequence 
$p=(x_0,x_1,\ldots,x_n)$ of vertices, where $x=x_0$, $y=x_n$ and $x_{i-1}$ is adjacent to 
$x_i$ for all $i=1,2,\ldots,n$. We call $n$ the \emph{length} of the path $p$. We will 
typically assume that the graph is connected, that is, any two vertices of it are connected 
by a path. Length of the shortest path between $x$ and $y$ is known as the \emph{distance} 
between $x$ and $y$. The largest distance between vertices of $G$ is called the 
\emph{diameter} of $G$. For $i\geq 1$, by $G_i(x)$ we mean the set of vertices of $G$ at 
distance $i$ from $x$. In particular, $G_1(x)$ is the set of neighbours of $x$ in $G$. Recall 
that $G$ is called $k$-\emph{regular} if $|G_1(x)|=k$ for every $x\in V(G)$. Then we call $k$ 
the \emph{degree} of the regular graph $G$. 

For $X\subset G$, we will similarly identify the subset $X$ with the induced subgraph on $X$. 
This is the subgraph that includes all edges with both ends in $X$. In particular, we will 
often refer to $G_1(x)$ as the \emph{local subgraph} of $G$. 

Consider a graph $G=\left\{u_1,u_2,\dots,u_v\right\}$. The \textit{adjacency matrix} $A=A(G)$ 
is the square matrix of size $v$, whose entries satisfy:
\begin{equation*}
a_{ij}=
\left\{
\begin{array}{rl}
1, & \text{if }u_i\sim u_j,\\    
0, & \text{if }u_i\not\sim u_j.
\end{array}
\right.
\end{equation*}
The \textit{spectrum} of the graph $G$ is the spectrum of its adjacency matrix $A$. In other 
words, if $A$ has eigenvalues $\lm_1,\lm_2,\dots,\lm_s$ with multiplicities $m_1,m_2,\dots,
m_s$, then the multiset $\Spec(G)=\{\lm_1^{m_1},\lm_2^{m_2},\dots,\lm_s^{m_s}\}$ is the 
spectrum. Note that $A$ is symmetric and hence all its eigenvalues $\lm_i$ are real. 
Furthermore, $A$ is semisimple (diagonalisable), that is, $\RR^v$ decomposes as the direct 
sum of the eigenspaces of $A$ and hence $\sum_{i=1}^sm_i=v$.

We also note the following standard fact about symmetric real matrices.

\begin{lemma}\label{Orthogonal eigenspaces}
Distinct eigenspaces of $A=A(G)$ are orthogonal with respect to the dot product on $\RR^v$.
\end{lemma}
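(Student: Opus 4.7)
The plan is to use the standard linear-algebra argument for symmetric matrices, taking advantage of the fact that $A=A(G)$ is symmetric (since $G$ is undirected). The key identity is that for any vectors $u,v\in\RR^v$, symmetry gives $\la Au,v\ra=\la u,Av\ra$, where $\la\cdot,\cdot\ra$ denotes the standard dot product.

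First I would fix two distinct eigenvalues $\lm,\mu$ of $A$ with corresponding eigenvectors $u$ and $v$, so that $Au=\lm u$ and $Av=\mu v$. Then I would compute $\la Au,v\ra$ in two different ways: using $Au=\lm u$ gives $\la Au,v\ra=\lm\la u,v\ra$, while using symmetry and $Av=\mu v$ gives $\la Au,v\ra=\la u,Av\ra=\mu\la u,v\ra$. Equating these yields $(\lm-\mu)\la u,v\ra=0$, and since $\lm\neq\mu$, we conclude $\la u,v\ra=0$.

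To finish, I would extend this from individual eigenvectors to entire eigenspaces by bilinearity of the dot product: any vector in the $\lm$-eigenspace is a linear combination of $\lm$-eigenvectors, and similarly for the $\mu$-eigenspace, so orthogonality propagates from a single pair of eigenvectors to the whole eigenspaces.

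There is no real obstacle here; this is a textbook fact, invoked in the excerpt only to support the discussion of the Euclidean representation. The only thing worth being careful about is to make sure the eigenvectors picked really are nonzero and that the symmetry of $A$ (which holds because $G$ has no directed edges) is used explicitly.
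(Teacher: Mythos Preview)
Your argument is correct and is the standard textbook proof. The paper does not actually prove this lemma; it simply records it as ``the following standard fact about symmetric real matrices,'' so there is nothing to compare against. One small remark: your final ``extension by bilinearity'' step is unnecessary, since every nonzero vector in an eigenspace is already an eigenvector by definition.
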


Therefore, the above decomposition of $\RR^v$ as a direct sum of eigenspaces of $A$ is 
orthogonal.

If $G$ is $k$-regular then each row of $A$ has exactly $k$ ones. Hence $k$ is an eigenvalue 
of $A$. Furthermore, if $G$ is connected then $k$ has multiplicity 1. The corresponding 
eigenspace is spanned by the all-one vector.

\medskip
We conclude this section with the following definition. An \emph{automorphism} of a graph $G$ 
is a permutation of $G$ that preserves adjacency. We denote by $\Aut(G)$ the group of all 
automorphisms of $G$.

\subsection{Strongly regular graphs}

A \textit{strongly regular graph} is a connected regular graph, for which there exist 
non-negative integers $\lm$ and $\mu$, such that any two adjacent vertices have exactly $\lm$ 
common neighbours and any two non-adjacent vertices have exactly $\mu$ common neighbours. We 
will write $\srg(v,k,\lm,\mu)$ for a strongly regular graph on $v$ vertices, of degree $k$, 
and with parameters $\lm$ and $\mu$, as above. 

We follow \cite{godsil-2001} for the basic results on strongly regular graphs.

\begin{theorem}
For an $\srg(v,k,\lm,\mu)$, let $A$ be its adjacency matrix, $I$ be the identity matrix and 
$J$ be the all-one matrix, both of the same order as $A$. Then the following relations hold:
$$
AJ=kJ\mbox{ and }A^2+(\mu-\lm)A+(\mu-k)I=\mu J.
$$
\end{theorem}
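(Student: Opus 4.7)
The plan is to verify both identities entry by entry, using the combinatorial interpretation of the adjacency matrix and its powers.

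For the first identity $AJ=kJ$, I would observe that the $(i,j)$-entry of $AJ$ equals $\sum_{\ell} a_{i\ell}$, which is precisely the degree of $u_i$. Since $G$ is $k$-regular, this sum equals $k$ for every $i$, so every entry of $AJ$ is $k$, which is exactly $kJ$.

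For the second identity, the key is the standard fact that $(A^2)_{ij}$ counts walks of length $2$ from $u_i$ to $u_j$, i.e., the number of common neighbours of $u_i$ and $u_j$ when $i\neq j$, and $|G_1(u_i)|=k$ when $i=j$. I would then split into three cases based on the relationship between $u_i$ and $u_j$:
\begin{itemize}
\item If $i=j$, then $(A^2)_{ii}=k$, $A_{ii}=0$, $I_{ii}=1$ and $J_{ii}=1$, so the left-hand side evaluates to $k+0+(\mu-k)=\mu$, matching the right-hand side.
\item If $i\neq j$ and $u_i\sim u_j$, then by the defining property of a strongly regular graph $(A^2)_{ij}=\lambda$, while $A_{ij}=1$, $I_{ij}=0$ and $J_{ij}=1$, giving $\lambda+(\mu-\lambda)+0=\mu$.
\item If $i\neq j$ and $u_i\not\sim u_j$, then $(A^2)_{ij}=\mu$ by strong regularity, while $A_{ij}=0$, $I_{ij}=0$ and $J_{ij}=1$, giving $\mu+0+0=\mu$.
\end{itemize}
In all three cases, the $(i,j)$-entry of $A^2+(\mu-\lambda)A+(\mu-k)I$ equals $\mu$, which is the corresponding entry of $\mu J$. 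Hence the matrix identity holds.

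There is no real obstacle here; the statement is a routine translation of the defining combinatorial conditions of a strongly regular graph into matrix form, and the proof is a direct entrywise verification. The only point that deserves a brief comment is the well-known interpretation of $(A^2)_{ij}$ as the number of length-$2$ walks, which follows immediately from the matrix product formula $(A^2)_{ij}=\sum_{\ell}a_{i\ell}a_{\ell j}$, with each nonzero summand corresponding to a common neighbour $u_\ell$ of $u_i$ and $u_j$.
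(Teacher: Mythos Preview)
Your proof is correct and is exactly the standard entrywise verification. The paper does not actually prove this theorem; it is stated as a known result with a reference to \cite{godsil-2001}, and your argument is precisely the routine one found in such references.
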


As an application of these equations, we have the following theorem.

\begin{theorem}\label{spectrum of srg}
If $G=\srg(v,k,\lm,\mu)$ then its spectrum is $\left\{ k^1,r^f,s^g\right\}$, where $r,s,f,g$ 
are given by:
\begin{align*}
r&=\tfrac{1}{2}\left(\lm-\mu+\sqrt{(\lm-\mu)^2+4(k-\mu)}\right),\\
s&=\tfrac{1}{2}\left(\lm-\mu-\sqrt{(\lm-\mu)^2+4(k-\mu)}\right),\\
f&=\tfrac{1}{2}\left(v-1-\frac{2k+(v-1)(\lm-\mu)}{\sqrt{(\lm-\mu)^2+4(k-\mu)}}\right),\\
g&=\tfrac{1}{2}\left(v-1+\frac{2k+(v-1)(\lm-\mu)}{\sqrt{(\lm-\mu)^2+4(k-\mu)}}\right).
\end{align*}
\end{theorem}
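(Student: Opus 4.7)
The plan is to derive both the eigenvalues and their multiplicities from the two matrix identities in the preceding theorem, exploiting the orthogonal decomposition of $\RR^v$ into eigenspaces of $A$.

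First I would handle the eigenvalue $k$. Since $G$ is $k$-regular and connected, the all-ones vector $\mathbf{1}$ satisfies $A\mathbf{1}=k\mathbf{1}$, and the eigenspace for $k$ is spanned by $\mathbf{1}$, so $k$ has multiplicity $1$. Next, I would restrict attention to the orthogonal complement $\mathbf{1}^\perp$. If $v\in\mathbf{1}^\perp$, then $Jv=0$, so applying the identity $A^2+(\mu-\lm)A+(\mu-k)I=\mu J$ to $v$ yields
\[
A^2 v + (\mu-\lm)Av + (\mu-k)v = 0.
\]
For an eigenvector $v$ of $A$ in $\mathbf{1}^\perp$ with eigenvalue $\th$, this forces $\th^2+(\mu-\lm)\th+(\mu-k)=0$. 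Solving this quadratic gives precisely the two values $r$ and $s$ in the statement, and by Lemma \ref{Orthogonal eigenspaces} together with the semisimplicity of $A$, the space $\mathbf{1}^\perp$ decomposes as the direct sum of the $r$- and $s$-eigenspaces (assuming both occur; the case where one does not would give a complete graph or its complement, which can be checked separately).

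To pin down the multiplicities $f$ and $g$ of $r$ and $s$, I would set up two linear equations. The first comes from dimension: $1+f+g=v$. The second comes from the trace of $A$, which is zero since the diagonal of $A$ is zero:
\[
\operatorname{tr}(A) = k + fr + gs = 0.
\]
Solving this $2\times 2$ linear system for $f$ and $g$, and substituting $r+s=\lm-\mu$ and $r-s=\sqrt{(\lm-\mu)^2+4(k-\mu)}$ from the quadratic formula, yields the stated closed-form expressions.

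The only real subtlety, rather than an obstacle, is bookkeeping in the final substitution: one must rewrite $f=\frac{v-1}{2}-\frac{k+(v-1)s/2\cdot 0 + \ldots}{r-s}$ carefully, using $gs+fr=-k$ and $f+g=v-1$ to express $f-g$ as $\frac{2k+(v-1)(\lm-\mu)}{r-s}$, and then combining with $f+g=v-1$ to extract $f$ and $g$ individually. There is no serious difficulty since the system is linear with nonzero determinant $r-s\ne 0$.
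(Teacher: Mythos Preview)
Your argument is the standard one and is correct; the paper itself does not give a proof of this theorem, merely stating it as a known application of the identities $AJ=kJ$ and $A^2+(\mu-\lm)A+(\mu-k)I=\mu J$ (with reference to \cite{godsil-2001}). One tiny slip in your final paragraph: from $f+g=v-1$ and $fr+gs=-k$ you actually get $f-g=-\dfrac{2k+(v-1)(\lm-\mu)}{r-s}$, not the positive sign you wrote, which is why $f$ carries the minus in the stated formula; this is exactly the routine bookkeeping you flagged and does not affect the argument.
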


Let us now see what this translates to for a possible $\srg(85,14,3,2)$.

\begin{corollary} \label{eigenvalues and multiplicities}
If $G=\srg(85,14,3,2)$ then $A(G)$ has eigenvalues $14$, $4$, and $-3$ with multiplicities $1$, 
$34$, and $50$, respectively.
\end{corollary}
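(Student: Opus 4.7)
The plan is to obtain the corollary as a direct numerical specialisation of Theorem \ref{spectrum of srg} applied to $(v,k,\lm,\mu)=(85,14,3,2)$. Since $G$ is assumed connected and $14$-regular, the preceding discussion already gives that $k=14$ is an eigenvalue of $A(G)$ with multiplicity $1$ (with the all-ones eigenvector). So the only work is to compute $r$, $s$, $f$, $g$ from the four formulas in Theorem \ref{spectrum of srg}.

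First I would compute the discriminant-type quantity appearing under the square root:
\[
(\lm-\mu)^2+4(k-\mu)=1^2+4\cdot 12=49,
\]
so its square root is the integer $7$. This makes $r$ and $s$ drop out immediately: $r=\tfrac{1}{2}(1+7)=4$ and $s=\tfrac{1}{2}(1-7)=-3$. These are exactly the claimed non-trivial eigenvalues.

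Next I would compute the numerator occurring in both $f$ and $g$:
\[
2k+(v-1)(\lm-\mu)=28+84\cdot 1=112,
\]
which gives $112/7=16$. Plugging in, $f=\tfrac{1}{2}(84-16)=34$ and $g=\tfrac{1}{2}(84+16)=50$, matching the claimed multiplicities. As a sanity check, $1+34+50=85=v$, consistent with $A(G)$ being diagonalisable over $\RR$.

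There is no genuine obstacle here; the only thing to watch is that the expressions in Theorem \ref{spectrum of srg} are only meaningful when the discriminant is non-negative, which in our case is automatic because $49>0$, and that we verify integrality of $f$ and $g$, which is one of the standard feasibility conditions. Both verifications succeed trivially, so the corollary follows.
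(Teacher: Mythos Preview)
Your proof is correct and follows exactly the approach the paper takes: it simply plugs $(v,k,\lm,\mu)=(85,14,3,2)$ into the formulae of Theorem \ref{spectrum of srg} and evaluates. Your arithmetic is right, and the added sanity check $1+34+50=85$ is a nice touch but not needed.
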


This is obtained by plugging the values of $(v,k,\lm,\mu)=(85,14,3,2)$ into the formulae from 
Theorem \ref{spectrum of srg}.

\subsection{Euclidean representation}

Now we introduce the \textit{Euclidean representation} of a strongly regular graph, which will 
be the main tool that we will use to eliminate cases in this project. The theory is mostly based 
on the book of Godsil \cite{ACGodsil}. There it is developed for arbitrary distance-regular 
graphs, so we adjust it for our case: strongly regular graphs are distance-regular graphs of 
diameter $2$. We also slightly alter the notation from the book.

As above, let $G=\left\{u_1,u_2,\dots,u_v\right\}$ be a strongly regular graph with parameters 
$(v,k,\lm,\mu)$. We will identify $G$ with the standard basis of $U=\RR^v$. 

Recall that $k$, $r$ and $s$ are the eigenvalues of the adjacency matrix $A=A(G)$ of $G$, of 
multiplicity $1$, $f$ and $g$, respectively. It follows from Theorem \ref{spectrum of srg} and 
the discussion around Lemma \ref{Orthogonal eigenspaces} that we have the following orthogonal 
decomposition:
$$
U=U_k\oplus U_r\oplus U_s,
$$
where $U_\th$ is the $\th$-eigenspace of $A$ for each $\th\in\{k,r,s\}$. As we already mentioned, 
since $G$ is connected, the $1$-dimensional eigenspace $U_k$ is spanned by the all-one vector. 
For $\th\in\{k,r,s\}$, consider the orthogonal projection $p_\th:U\rightarrow U_\th$. Then we 
have the following result (see Lemma 1.2 in Chapter 13 of \cite{ACGodsil}).

\begin{theorem} \label{depends on distance}
For $u_i,u_j\in V(G)$, the value of $p_\th(u_i)\cdot p_\th(u_j)$ depends only on the distance 
between $u_i$ and $u_j$. 
\end{theorem}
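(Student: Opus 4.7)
My plan is to identify the orthogonal projection $p_\theta$ with a matrix $E_\theta$ that is a polynomial in the adjacency matrix $A$, and then exploit the fact that $I$, $A$, and $A^2$ all have the property that the $(i,j)$ entry depends only on the distance between $u_i$ and $u_j$.

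First, let $E_\theta$ denote the matrix of $p_\theta$ in the standard basis of $U=\RR^v$. Since $p_\theta$ is an orthogonal projection onto $U_\theta$, the matrix $E_\theta$ is symmetric and idempotent. Therefore, for any $u_i, u_j$ in the standard basis,
$$p_\theta(u_i)\cdot p_\theta(u_j)=(E_\theta u_i)^T(E_\theta u_j)=u_i^T E_\theta^T E_\theta u_j=u_i^T E_\theta u_j=(E_\theta)_{ij}.$$
So it suffices to show that the entry $(E_\theta)_{ij}$ depends only on the distance between $u_i$ and $u_j$ in $G$.

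Next, I would express $E_\theta$ as a polynomial in $A$. Because $A$ is diagonalisable with three distinct eigenvalues $k, r, s$, Lagrange interpolation (applied to the spectral decomposition $A=kE_k+rE_r+sE_s$) gives
$$E_\theta=\prod_{\phi\in\{k,r,s\},\ \phi\ne\theta}\frac{A-\phi I}{\theta-\phi},$$
which is a polynomial in $A$ of degree at most $2$. Hence we can write $E_\theta=a_0 I+a_1 A+a_2 A^2$ for some scalars $a_0,a_1,a_2$ (depending on $\theta$ and on the parameters of $G$).

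Finally, I would use the defining property of a strongly regular graph. The $(i,j)$-entry of $I$ is $1$ if $i=j$ and $0$ otherwise; the $(i,j)$-entry of $A$ is $1$ if $u_i\sim u_j$ and $0$ otherwise; and $(A^2)_{ij}$ counts walks of length $2$ from $u_i$ to $u_j$, which equals $k$, $\lm$, or $\mu$ according to whether $d(u_i,u_j)=0$, $1$, or $2$ (using $G$ has diameter $2$). Thus each of $I_{ij}$, $A_{ij}$, and $(A^2)_{ij}$ is a function of $d(u_i,u_j)$ alone, and therefore so is $(E_\theta)_{ij}=a_0 I_{ij}+a_1 A_{ij}+a_2(A^2)_{ij}$.

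The only mildly non-routine step is the polynomial representation of $E_\theta$, which is really just the spectral theorem combined with the observation that the minimal polynomial of $A$ has degree $3$; this is standard once one has Theorem \ref{spectrum of srg}. The rest is direct computation, and the whole argument yields the three explicit values of $p_\theta(u_i)\cdot p_\theta(u_j)$ as the \emph{cosine sequence} $(w_0,w_1,w_2)$ used later in the paper.
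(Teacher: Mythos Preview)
Your argument is correct and is precisely the standard proof: identify $p_\th(u_i)\cdot p_\th(u_j)$ with the $(i,j)$-entry of the projection idempotent $E_\th$, write $E_\th$ as a quadratic polynomial in $A$ via Lagrange interpolation on the three eigenvalues, and then observe that the entries of $I$, $A$, and $A^2$ are constant on pairs at a fixed distance because $G$ is strongly regular.

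The paper does not actually give its own proof of this theorem; it simply cites Lemma~1.2 in Chapter~13 of Godsil's \emph{Algebraic Combinatorics}, where the result is stated (more generally, for distance-regular graphs). Your proof is exactly the specialisation of that argument to the diameter-$2$ case, so there is nothing to contrast: you have supplied the proof that the paper outsources.
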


We will represent the vertices of $G$ by their images under $p_\th$, but scaled to have length 
$1$. That is, the Euclidean representation of $G$ with respect to $\th$ represents every vertex 
$u_i$ by the unit vector $e_i:=\frac{1}{|p_\th(u_i)|}p_\th(u_i)$ from the subspace $U_\th$. If 
the distance between $u_i$ and $u_j$ is $m\in\{0,1,2\}$ then we have, by Theorem \ref{depends on 
distance}, that 
$$
w_m:=e_i\cdot e_j=\frac{p_\th(u_i)\cdot p_\th(u_j)}{|p_\th(u_i)||p_\th(u_j)|}
$$
is a function of $m$ alone. The values $w_m$, $m\in\{0,1,2\}$, are known as the \emph{cosine 
sequence} of $G$. We can furthermore claim the following exact formulae for the values of $w_m$.

\begin{theorem}[\cite{BrouwerDRGbook}, Section 4.1B] \label{cosseq for srg}
We have that $w_0=1$, $w_1=\frac{\th}{k}$ and $w_2=\frac{\th^2-\lm\th-k}{k(k-\lm-1)}$.    
\end{theorem}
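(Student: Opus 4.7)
The plan is to exploit the fact that the orthogonal projection $p_\theta$ commutes with $A$, which holds because the eigenspace decomposition noted after Lemma \ref{Orthogonal eigenspaces} is $A$-invariant and orthogonal. Consequently $p_\theta(Au_i) = \theta\, p_\theta(u_i)$, and substituting $Au_i = \sum_{u_k \sim u_i} u_k$ yields the central identity
$$\sum_{u_k \sim u_i} p_\theta(u_k) = \theta\, p_\theta(u_i).$$
Before applying it, I would record that, by Theorem \ref{depends on distance} at distance $0$, the squared norm $|p_\theta(u_i)|^2$ is a constant $\al^2$ independent of $i$, so $p_\theta(u_i) \cdot p_\theta(u_j) = \al^2 w_m$ whenever $d(u_i, u_j) = m$.

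The identity $w_0 = 1$ is built into the unit normalisation $e_i = p_\theta(u_i)/|p_\theta(u_i)|$. For $w_1$, I would dot the central identity with $p_\theta(u_i)$ itself: each of the $k$ neighbours of $u_i$ sits at distance $1$, so the left side collapses to $k\al^2 w_1$ while the right side equals $\theta \al^2$. This immediately gives $w_1 = \theta/k$.

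For $w_2$, I would fix some neighbour $u_j$ of $u_i$ and take the dot product of the central identity with $p_\theta(u_j)$. The key combinatorial step is partitioning the $k$ neighbours of $u_i$ by their distance to $u_j$: exactly one is $u_j$ itself (distance $0$), exactly $\lm$ of them are the common neighbours of $u_i$ and $u_j$ (distance $1$), and the remaining $k-1-\lm$ lie at distance $2$. Equating both sides and dividing by $\al^2$ yields
$$\theta w_1 = 1 + \lm w_1 + (k-1-\lm) w_2,$$
and substituting $w_1 = \theta/k$ and solving for $w_2$ produces the claimed formula.

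There is essentially no obstacle; the calculation is short and standard once the commutation identity is available. The only minor point to verify is that $k-\lm-1 \neq 0$, i.e., that the local graphs are not complete, so that the last division is legitimate; this is automatic in the parameter ranges of interest, including $\srg(85,14,3,2)$. As a sanity check, plugging $\theta = k$ into the formulae recovers $w_1 = w_2 = 1$, matching the fact that $U_k$ is spanned by the all-ones vector.
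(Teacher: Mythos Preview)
Your argument is correct and complete. The paper does not actually prove this theorem: it is quoted from \cite{BrouwerDRGbook}, Section 4.1B, and the only follow-up is the specialisation in Corollary \ref{cosine sequence}. Your derivation via the eigenvector relation $\sum_{u_k\sim u_i}p_\th(u_k)=\th\,p_\th(u_i)$ is the standard one (it is essentially the recursion for the cosine sequence of a distance-regular graph, restricted to diameter $2$), and every step checks out, including the partition of $G_1(u_i)$ by distance to $u_j$ and the final algebra. The side remarks about $k-\lm-1\neq 0$ and the $\th=k$ sanity check are accurate as well.
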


We now specialise this to the case of $\srg(85,14,3,2)$. First of all, we choose $\th=4$ (c.f. 
Corollary \ref{eigenvalues and multiplicities}), so that our representation is in the eigenspace 
$E:=U_4$ of dimension $34$. From now on, this is our fixed choice.

\begin{corollary} \label{cosine sequence}
For $G=\srg(85,14,3,2)$ and $\th=4$, the cosine sequence is $w_0=1$, $w_1=\frac{2}{7}$ and 
$w_2=-\frac{1}{14}$.
\end{corollary}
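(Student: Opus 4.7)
The plan is to obtain Corollary \ref{cosine sequence} as an immediate numerical specialisation of Theorem \ref{cosseq for srg}. Since the theorem provides closed-form expressions for $w_0$, $w_1$, and $w_2$ in terms of $\th$, $k$, and $\lm$, all that is required is to substitute $\th=4$, $k=14$, and $\lm=3$ into each formula and simplify.

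Concretely, $w_0=1$ is immediate. For $w_1$, the formula gives $w_1=\th/k=4/14$, which reduces to $\frac{2}{7}$. For $w_2$, the formula gives
\[
w_2=\frac{\th^2-\lm\th-k}{k(k-\lm-1)}=\frac{16-12-14}{14\cdot 10}=\frac{-10}{140}=-\frac{1}{14},
\]
as claimed. The only thing to verify before the substitution is that $\th=4$ is indeed a legitimate choice of eigenvalue, but this was already arranged in the paragraph preceding the corollary by invoking Corollary \ref{eigenvalues and multiplicities}, which confirms that $4$ is one of the two non-trivial eigenvalues of $A(G)$, with the associated eigenspace $E=U_4$ of dimension $34$.

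There is no genuine obstacle here; the statement is a worked example rather than an independent result. The only conceptual point worth noting is that the choice of $\th$ (as opposed to the other non-trivial eigenvalue $s=-3$) determines the sign and magnitude of the cosines, and hence the geometry of the Euclidean representation used in the rest of the paper. Selecting $\th=4$ yields a positive value of $w_1$, which is convenient since adjacency then corresponds to an acute-angle inner product $\frac{2}{7}$, while non-adjacency produces the small negative value $-\frac{1}{14}$; these two rational numbers are the ones fed into the Gram matrix computations that drive the subsequent elimination arguments.
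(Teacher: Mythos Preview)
Your proof is correct and matches the paper's approach exactly: the corollary is stated without explicit proof in the paper, as it is simply a direct substitution of $\th=4$, $k=14$, $\lm=3$ into the formulae of Theorem~\ref{cosseq for srg}. Your additional remarks about the choice of $\th$ are accurate but not part of the proof proper.
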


Hence, when the vertices $u_i$ and $u_j$ are adjacent, we have that $e_i\cdot e_j=\frac{2}{7}$ 
and, when $u_i$ and $u_j$ are non-adjacent, we have that $e_i\cdot e_j=-\frac{1}{14}$. Note that 
the unit vectors $\{e_1,e_2,\ldots,e_v\}$ span $E$. Indeed, this follows from the fact that 
$\{u_1,u_2,\ldots,u_v\}$ is a basis of $U=\RR^v$.

From this point on, $G=\srg(85,14,3,2)$ and it is realised by the unit vectors $e_i$ in the 
$34$-dimensional Euclidean space $E$ as above. Our goal is to obtain a contradiction, showing 
non-existence of $G$, by considering various sets of vectors $e_i$. If $X$ is such a set, then 
the Gram matrix corresponding to $X$ must be semi-positive definite, i.e., it cannot have 
negative eigenvalues. This is because the dot product on $E\subset U$ is positive definite.

\section{Local structure of $G$} 

Let us focus on the local subgraph $G_1(x)$ of $G$, induced by the $14$ neighbours of a vertex 
$x\in G$. Since $\lm=3$, the subgraph $G_1(x)$ is cubic, i.e., of degree $3$. Connected cubic 
graphs on at most $14$ have been enumerated, see e.g. \cite{MR424607,BUSSEMAKE1977234}. The 
complete list of all these graphs can be found on the internet and examined via, say, the 
computer algebra system GAP \cite{GAP4}. Since at this point we cannot assume that the local 
subgraphs $G_1(x)$ are connected, we should also add disconnected cubic graphs on $14$ vertices, 
as unions of smaller connected cubic graphs. In total, there are, up to isomorphism, $509$ 
connected cubic graphs on $14$ vertices and $31$ disconnected ones. 

\begin{lemma}\label{localgraph1}
Any two non-adjacent vertices $y,z\in G_1(x)$ have at most one common neighbour in $G_1(x)$.
\end{lemma}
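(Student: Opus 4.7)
The plan is to exploit the parameter $\mu = 2$ directly. Pick any two non-adjacent vertices $y, z \in G_1(x)$. Since $y \not\sim z$ in $G$, they have exactly $\mu = 2$ common neighbours in $G$.

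The key observation is that $x$ itself is automatically one of those two common neighbours, since $x \sim y$ and $x \sim z$ by hypothesis (both $y$ and $z$ lie in $G_1(x)$). Therefore, apart from $x$, the pair $\{y,z\}$ has exactly one further common neighbour in $G$. Any vertex that is a common neighbour of $y$ and $z$ inside the subgraph $G_1(x)$ must, in particular, be a common neighbour of $y$ and $z$ in $G$, and it must be distinct from $x$ (since $x \notin G_1(x)$). Hence the number of common neighbours of $y$ and $z$ in $G_1(x)$ is at most $\mu - 1 = 1$, which is what we want.

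I don't expect any real obstacle here: the statement is essentially a one-line consequence of $\mu = 2$ combined with the fact that $x$ automatically eats up one of the two guaranteed common neighbours. No appeal to $\lambda$, the cubic structure of $G_1(x)$, or the Euclidean representation is required. The proof would fit in two or three sentences.
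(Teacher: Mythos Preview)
Your proof is correct and is essentially identical to the paper's own argument: both use $\mu=2$ to count the two common neighbours of $y$ and $z$ in $G$, observe that $x$ is one of them, and conclude that at most one common neighbour can lie in $G_1(x)$.
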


\begin{proof}
Since $\mu=2$, any two non-adjacent vertices $y,z\in G_1(x)$ have two common neighbours in $G$. 
One of them is $x$, hence $y$ and $z$ have at most one common neighbour in $G_1(x)$.
\end{proof}

It turns out this simple lemma allows us to reduce significantly the number of possible graphs 
$G_1(x)$. Going through the list of all $540$ cubic graphs on $14$ vectices, we discover that 
only $39$ of them, referred to in this paper as the \emph{good} cubic graphs, satisfy the 
condition from Lemma \ref{localgraph1}. Hence we have the following.

\begin{proposition}
The local subgraph $G_1(x)$, for $x\in G$, is isomorphic to one of the $39$ good cubic graphs.
\end{proposition}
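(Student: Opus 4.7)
The plan is to carry out a direct computer search, starting from a known catalogue of cubic graphs and filtering by the combinatorial condition from Lemma \ref{localgraph1}. Since $\lm=3$ and every vertex of $G$ has exactly $14$ neighbours, the local subgraph $G_1(x)$ is $3$-regular on $14$ vertices, so it belongs to the union of the lists of connected and disconnected cubic graphs on $14$ vertices. First I would import into GAP the complete list of $509$ connected cubic graphs on $14$ vertices from one of the standard enumerations, such as \cite{MR424607} or \cite{BUSSEMAKE1977234}. Next I would generate all disconnected cubic graphs on $14$ vertices as disjoint unions of smaller connected cubic graphs. Since the smallest connected cubic graph is $K_4$, the admissible partitions of the vertex set into component orders are $4+10$, $6+8$ and $4+4+6$; combining these with the known counts $1,2,5,19$ of connected cubic graphs on $4,6,8,10$ vertices gives $19+10+2=31$ disconnected graphs on $14$ vertices, as cited in the excerpt, for a combined total of $540$ candidate cubic graphs.

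The filtering step is mechanical. For each candidate $H$ on vertex set $V$, I would test the predicate $|H_1(y)\cap H_1(z)|\le 1$ for every non-adjacent pair $y,z\in V$. Equivalently, if $A$ is the adjacency matrix of $H$, the condition is that every off-diagonal entry of $A^2$ whose corresponding entry in $A$ vanishes is at most $1$. Any $H$ failing the test is discarded. Running this filter over the $540$ candidates leaves $36$ connected survivors and $3$ disconnected survivors, each of the latter consisting of a $K_4$ together with a connected cubic graph on $10$ vertices that itself satisfies the predicate, yielding $39$ good graphs in total. Since, by Lemma \ref{localgraph1}, the graph $G_1(x)$ must itself satisfy the predicate and is cubic on $14$ vertices, it must be isomorphic to one of the $39$ good cubic graphs.

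The main obstacle here is bookkeeping rather than mathematics. The conclusion rests entirely on completeness of the catalogue of cubic graphs on $14$ vertices, on accurate enumeration of all disconnected unions summing to $14$, and on reliable isomorphism testing so that no isomorphism class is missed or double-counted. These points can be validated by comparing the output to the published orbit counts of connected cubic graphs of each even order up to $14$ and by verifying in GAP, for instance via canonical labelling, that the $39$ surviving graphs are pairwise non-isomorphic.
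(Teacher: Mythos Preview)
Your proposal is correct and follows essentially the same approach as the paper: the paper also simply filters the complete list of $540$ cubic graphs on $14$ vertices by the condition of Lemma \ref{localgraph1}, reporting that exactly $39$ survive. You provide somewhat more implementation detail (the $A^2$ criterion, the explicit enumeration of disconnected unions, canonical labelling checks), but the underlying argument is identical.
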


Out of these graphs, $36$ are connected and $3$ are disconnected, namely, they are unions of a 
$4$-clique and a connected trivalent graph $T_i$ of order $10$, $i=1,2,3$, shown in Figure 
\ref{cubic graphs}. Note that $T_3$ is the Petersen graph.
\begin{figure}[H]
\centering  
\subfigure[$T_1$]{
\label{T_1 graph}
\includegraphics[width=0.3\textwidth]{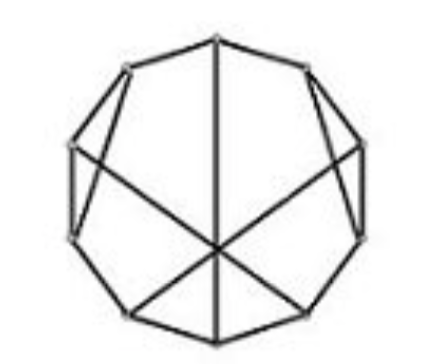}}
\subfigure[$T_2$]{
\label{T_2 graph}
\includegraphics[width=0.3\textwidth]{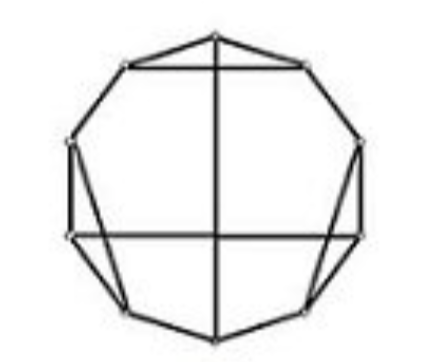}}
\subfigure[$T_3$]{
\label{T_3 graph}
\includegraphics[width=0.3\textwidth]{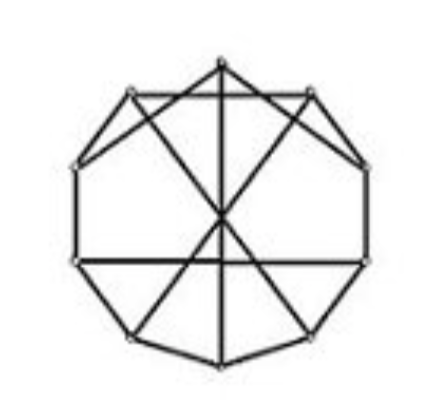}}
\caption{Relevant cubic graphs of order 10}
\label{cubic graphs}
\end{figure}

Our method later in the text is based on considering on a neighbourhood of a maximal $3$-clique 
in $G$. We can derive the existence of such maximal cliques from the enumeration above.

\begin{proposition}\label{maximal 3-cliques exist}
Maximal $3$-cliques exist in $G$.
\end{proposition}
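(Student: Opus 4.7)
The plan is to translate the statement into a local condition on the neighbourhoods of $G$. A triangle $\{x,y,z\}$ of $G$ fails to be maximal precisely when it lies in a $4$-clique, i.e., when there is a fourth vertex $w$ with $w\sim x,y,z$. Such a $w$ automatically lies in $G_1(x)$, and the two remaining conditions $w\sim y$ and $w\sim z$ say exactly that $y$ and $z$ have a common neighbour inside $G_1(x)$, or equivalently, that the edge $\{y,z\}$ of $G_1(x)$ is contained in a triangle of $G_1(x)$. So maximal $3$-cliques through $x$ correspond bijectively to edges of $G_1(x)$ that lie in no triangle of $G_1(x)$.

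Given this reduction, it suffices to verify that every good cubic graph contains at least one edge not lying in any triangle; then for \emph{any} $x\in G$, picking such an edge $\{y,z\}$ of $G_1(x)$ produces a maximal $3$-clique $\{x,y,z\}$. My plan is to establish this property by going through the list of $39$ good cubic graphs case by case. For the three disconnected good graphs $K_4\cup T_i$, the $K_4$ component has every edge in a triangle, so the desired edge must be located in the $T_i$ component. The graph $T_3$ is the Petersen graph and has girth $5$, so it is triangle-free and every one of its edges qualifies; analogous triangle-free edges in $T_1$ and $T_2$ can be read off Figure~\ref{cubic graphs}. For the $36$ connected good cubic graphs, the same verification amounts to a short inspection of the enumerated list, most conveniently carried out in GAP.

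Since the entire argument reduces to this finite check, there is no real conceptual obstacle. The only work is the bookkeeping needed to confirm, for each of the $39$ graphs on the list, that its edge set is not entirely covered by triangles; once this is done, the conclusion follows immediately by choosing any $x\in G$ together with a witnessing edge in $G_1(x)$.
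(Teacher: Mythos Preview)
Your proposal is correct and follows essentially the same approach as the paper: you reduce the existence of a maximal $3$-clique through $x$ to the existence of an edge in $G_1(x)$ not lying in a triangle of $G_1(x)$, and then verify this by inspecting the list of $39$ good cubic graphs. Your write-up is more detailed than the paper's terse version, but the underlying argument is identical.
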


\begin{proof}
For a vertex $x\in G$, maximal $3$-cliques containing $x$ correspond to edges in $G_1(x)$ not 
contained (within $G_1(x)$) in a $3$-clique. Hence the existence of maximal $3$-cliques can be 
established by simply going through the list of $39$ possible local graphs. 
\end{proof}

Alternatively, it is easy to show that if $y\in G_1(x)$ is not contained in a component of size 
$4$ of $G_1(x)$ then its local graph in $G_1(x)$ is either a $3$-coclique or a union of an edge 
and an isolated vertex. Hence every such edge $xy$ is contained in a maximal $3$-clique in $G$. 
We can formulate this as follows, specialising Proposition \ref{maximal 3-cliques exist}.

\begin{proposition}\label{3-clique}
Every edge $xy\in E(G)$ that is not contained in a $5$-clique, is contained in a maximal 
$3$-clique.
\end{proposition}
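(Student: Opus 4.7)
The plan is to reduce the claim to a small case analysis on the three-vertex induced subgraph $H$ of $G_1(x)$ on the set $N:=G_1(x)\cap G_1(y)$ of common neighbours of $x$ and $y$, which is the same as the local subgraph of $y$ inside $G_1(x)$. Since $\lambda=3$, we have $|N|=3$. The key observation is that a triangle $\{x,y,z\}$ with $z\in N$ is a maximal $3$-clique of $G$ if and only if no vertex of $G$ is adjacent to all of $x$, $y$, $z$; equivalently, $z$ has no neighbour inside $N\setminus\{z\}$, i.e., $z$ is an isolated vertex of $H$. Hence the proposition reduces to showing that, under the hypothesis that $xy$ is not contained in a $5$-clique, the three-vertex graph $H$ has at least one isolated vertex.

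Up to isomorphism there are only four graphs on three vertices, and the two that fail to contain an isolated vertex are the triangle $K_3$ and the path $P_3$. So it suffices to rule out $H\cong K_3$ and $H\cong P_3$. If $H\cong K_3$, then $\{x,y\}\cup N$ is a $5$-clique containing the edge $xy$, contradicting the hypothesis. If $H\cong P_3$, say with path $a\sim b\sim c$ and $a\not\sim c$, then inside $G_1(x)$ the non-adjacent pair $\{a,c\}$ has at least two distinct common neighbours in $G_1(x)$: the middle vertex $b$ of the path, and $y$ itself, because $y\in G_1(x)$ and $a,c\in N\subset G_1(y)$. This contradicts Lemma \ref{localgraph1}, which is the defining property of the good cubic graphs to which $G_1(x)$ belongs.

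The proof is short, and I do not expect any serious obstacle. The only subtlety is the remark that $y$ itself is a vertex of $G_1(x)$ and is therefore automatically a common neighbour in $G_1(x)$ of any two vertices of $N$; this extra common neighbour is exactly what turns the $P_3$ case into a violation of Lemma \ref{localgraph1}. Once that bookkeeping is noted, the case analysis is immediate.
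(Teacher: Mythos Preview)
Your proof is correct and follows essentially the same approach as the paper. The paper states (without detailed proof) that the local graph of $y$ inside $G_1(x)$ is either a $3$-coclique or an edge together with an isolated vertex, unless $y$ lies in a $K_4$-component of $G_1(x)$; you make this explicit by carrying out the four-case analysis on the three-vertex graph $H$ and using Lemma~\ref{localgraph1} to exclude the $P_3$ case, which is exactly the content the paper leaves as ``easy to show''.
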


In turn, this implies the following.

\begin{corollary}\label{Maximal 3-clique}
Every vertex $x\in G$ is contained in a maximal $3$-clique.
\end{corollary}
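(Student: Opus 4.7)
The plan is to deduce this corollary directly from Proposition \ref{3-clique} by locating, for each vertex $x$, at least one edge through $x$ that is not contained in any $5$-clique of $G$.

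First I would translate the hypothesis "$xy$ is contained in a $5$-clique" into a structural condition inside the local graph $G_1(x)$. Since the only $3$ common neighbours of $x$ and $y$ are precisely the neighbours of $y$ in $G_1(x)$, an edge $xy$ lies in a $5$-clique $\{x,y,a,b,c\}$ if and only if the three neighbours of $y$ in the cubic graph $G_1(x)$ form a triangle, i.e.\ $\{y,a,b,c\}$ is a $4$-clique in $G_1(x)$. Because $G_1(x)$ is cubic, such a $4$-clique must be an entire connected component of $G_1(x)$. Hence the neighbours $y$ of $x$ for which $xy$ lies in a $5$-clique are exactly those lying in a $4$-clique component of $G_1(x)$.

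Next I would invoke the classification from Section 3: $G_1(x)$ is one of the $39$ good cubic graphs on $14$ vertices, and among these only $3$ are disconnected, each being the disjoint union of a single $K_4$ and a connected cubic graph $T_i$ on $10$ vertices. In particular, $G_1(x)$ contains at most one $4$-clique component, so at least $14-4=10$ vertices $y\in G_1(x)$ lie outside any $4$-clique component. For any such $y$, the previous paragraph shows that $xy$ is not contained in any $5$-clique of $G$, and Proposition \ref{3-clique} then supplies a maximal $3$-clique containing $xy$, and in particular containing $x$.

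There is no genuine obstacle here: the whole argument is a bookkeeping step on top of the enumeration of good cubic graphs and Proposition \ref{3-clique}. The only point that needs a moment of care is the equivalence "$xy$ in a $5$-clique $\iff$ $y$ in a $K_4$-component of $G_1(x)$", which relies on $\lambda=3$ forcing the candidate fourth and fifth vertices of the $5$-clique to be exactly the three neighbours of $y$ in $G_1(x)$, combined with the fact that a $K_4$ inside a cubic graph must be a whole component.
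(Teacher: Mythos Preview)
Your argument is correct and is exactly the reasoning the paper intends: the paper leaves the corollary without an explicit proof, merely noting that it follows from Proposition~\ref{3-clique} together with the preceding remark that a vertex $y\in G_1(x)$ not lying in a $K_4$-component yields an edge $xy$ outside any $5$-clique. You have simply written out the details of this implication, including the equivalence ``$xy$ lies in a $5$-clique $\iff$ $y$ lies in a $K_4$-component of $G_1(x)$'' and the observation that the good cubic graphs have at most one such component.
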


\section{Segments and handles} \label{segs}

Let $Q$ be a maximal $3$-clique in $G$. For each $x\in Q$, we have that $yz=Q\setminus\{x\}$ is 
an edge in the local subgraph $G_1(x)$. We now focus on the remaining twelve vertices in 
$G_1(x)\setminus yz$. Since $Q$ is maximal, the edge $xy$ is not contained in a $3$-clique in the 
good cubic graph $G_1(x)$. 

\begin{definition}\label{DefSegment}
Suppose that $H$ is a good cubic graph and $yz$ is an edge in $H$ not contained in a $3$-clique. 
Let $S$ be the subgraph of $H$ induced on $H\setminus yz$. We call $S$ the \emph{segment} 
corresponding to $H$ and $yz$.
\end{definition}

Since $yz$ is not contained in a $3$-clique, $y$ and $z$ have no common neighbours in $H$. Let 
$y_1$ and $y_2$ be the neighbours of $y$ in $H$, other than $z$, and symmetrically, let $z_1$ and 
$z_2$ be the neighbours of $z$ in $H$, other than $y$. Clearly, the pairs $\{y_1,y_2\}$ and 
$\{z_1,z_2\}$ are disjoint and contained in $S$.

\begin{definition}
We call the pairs $\{y_1,y_2\}$ and $\{z_1,z_2\}$ the \emph{handles} of the segment $S$.
\end{definition}

We note that the segment is not just the graph $S$, but rather the triple consisting of $S$ and 
the two handles $\{y_1,y_2\}$ and $\{z_1,z_2\}$. Note also that while $\{y_1,y_2,z_1,z_2\}$ can 
be identified within $S$ as the set of all vertices of $S$ having only two neighbours, this set 
can potentially be split into a union of two handles in more than one way. Hence we need to view 
a segment as a triple, expressly indicating the handles. The order of the two handles is less 
important, but naturally, they are kept in a computer in some order. When we talk below about 
isomorphisms of segments $S$ and $S'$, we mean graph isomorphisms $S\to S'$ mapping the first 
and second handles of $S$ to the first and second handles of $S'$, respectively.

We now turn to enumeration of segments. The good graph $H$ can be easily recovered from its 
segment $S$. Indeed, we just need to add to $S$ a new vertex $y$ adjacent to the two vertices in 
the first handle $\{y_1,y_2\}$ and a second new vertex $z$ adjacent to $y$ and the vertices in 
the second handle $\{z_1,z_2\}$. In particular, every segment $S$ arises from a unique good 
cubic graph $H$. 

This allows for a very efficient enumeration of segments. We go through the list of the $39$ good 
cubic graphs $H$. In each $H$, we determine the orbits under $\Aut(H)$ on the set of ordered 
edges of $H$. Segments are isomorphic if and only if the come from the same orbit. Hence we 
obtain a complete list of segments by choosing one representative $yz$ of each orbit and 
constructing the segment $H\setminus yz$. 

Before we report the results of this calculation, we need to discuss the \emph{types} of handles 
and segments. A handle is a pair of vertices and, naturally, these two vertices can be adjacent 
or non-adjacent. Hence the handles are classified into \emph{edges} and \emph{non-edges}. 
Correspondingly, segments can be of four types depending on their first and second handle: (1) 
edge and edge; (2) edge and non-edge; (3) non-edge and edge; and (4) non-edge and non-edge. 
Clearly, for every segment of type (2), we obtain a segment of type (3) by switching the handles. 
Types (1) and (4) are invariant under this operation.

\begin{proposition}
There are in total $478$ segments up to isomorphism. Among them there are $19$, $78$, $78$, $303$ 
segments of types $(1)$, $(2)$, $(3)$, and $(4)$, respectively.
\end{proposition}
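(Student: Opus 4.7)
The plan is to carry out an explicit orbit computation, exploiting the bijection between segments (up to isomorphism) and certain orbits of automorphism groups of good cubic graphs. By the remarks immediately preceding the proposition, every segment arises from a unique pair $(H, yz)$ consisting of a good cubic graph $H$ and an edge $yz$ of $H$ not contained in a $3$-clique. Moreover, two such pairs $(H, y_1z_1)$ and $(H, y_2z_2)$ produce isomorphic segments (in the paper's sense, preserving the ordering of handles) precisely when the ordered edges $(y_1, z_1)$ and $(y_2, z_2)$ lie in the same orbit of $\Aut(H)$ acting on the set of ordered edges of $H$. Segments arising from non-isomorphic good cubic graphs are automatically non-isomorphic.

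First I would load, for each of the $39$ good cubic graphs $H$ from Section 3, both $H$ and $\Aut(H)$ into GAP. For each ordered edge $(y,z)$ of $H$, I would verify the ``not in a $3$-clique'' condition by checking $N_H(y) \cap N_H(z) = \emptyset$, then record the induced graph $S = H \setminus \{y,z\}$ together with the ordered pair of handles $(\{y_1,y_2\},\{z_1,z_2\})$, where $\{y_1,y_2\} = N_H(y) \setminus \{z\}$ and $\{z_1,z_2\} = N_H(z) \setminus \{y\}$. Next I would compute the orbits of $\Aut(H)$ on the set of such valid ordered edges, taking one representative from each orbit. The sum over all $39$ graphs of the number of these orbits gives the total count of segments up to isomorphism. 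Finally, for each representative, I would classify it by type by checking, inside $H$, whether each of the two handles is an edge or non-edge; the four possibilities give types $(1)$--$(4)$, and tallying should produce the claimed counts $19$, $78$, $78$, $303$.

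There is no deep obstacle here: the proof is essentially a verified computation, and the main risks are bookkeeping errors, in particular failing to distinguish ordered from unordered edges (which would collapse types $(2)$ and $(3)$ and halve their count). Two built-in consistency checks make the computation robust. First, the operation that reverses the order of handles is an involution on the set of segments which exchanges types $(2)$ and $(3)$ and fixes types $(1)$ and $(4)$ setwise, so the type $(2)$ and type $(3)$ counts must coincide; this matches the predicted $78 = 78$. Second, the total $19 + 78 + 78 + 303 = 478$ can be cross-checked directly against the orbit count $\sum_H |\{\text{valid ordered edges in } H\}/\Aut(H)|$ obtained in a single loop without any type classification at all.
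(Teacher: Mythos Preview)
Your proposal is correct and follows essentially the same approach as the paper: the paragraph immediately preceding the proposition already describes the method of enumerating orbits of $\Aut(H)$ on ordered edges (not in a $3$-clique) of each of the $39$ good cubic graphs $H$, and the proposition simply reports the outcome of that computation. Your added consistency checks (the involution swapping types $(2)$ and $(3)$, and the independent total) are sensible safeguards not mentioned in the paper, but the underlying enumeration is the same.
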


We now introduce an additional structure on a segment. First, we need the following.

\begin{lemma}\label{segnonadj}
Let $S$ be a segment and $Y=\{y_1,y_2\}$ and $Z=\{z_1,z_2\}$ be its two handles. Then $y_i$ and 
$z_j$ are non-adjacent for all $i,j=1,2$.
\end{lemma}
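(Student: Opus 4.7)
The plan is to proceed by contradiction, leveraging the defining property of good cubic graphs. Recall from Definition \ref{DefSegment} that $S$ arises from a good cubic graph $H$ by deleting the edge $yz$, and that $y_1,y_2$ are the two neighbours of $y$ in $H$ other than $z$, while $z_1,z_2$ are the two neighbours of $z$ in $H$ other than $y$. Since $yz$ is not contained in a $3$-clique of $H$, the vertices $y$ and $z$ share no common neighbour in $H$, so the two handles $\{y_1,y_2\}$ and $\{z_1,z_2\}$ are disjoint (as already noted in the paragraph after Definition \ref{DefSegment}). Suppose, for contradiction, that $y_i\sim z_j$ in $S$ for some $i,j\in\{1,2\}$.

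I would first verify that $y$ and $z_j$ are themselves non-adjacent in $H$. The three neighbours of $y$ in the cubic graph $H$ are precisely $z,y_1,y_2$, and by the disjointness of the handles, $z_j\notin\{y_1,y_2\}$; since also $z_j\ne z$, we conclude $y\not\sim z_j$ in $H$.

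Next, I would exhibit two distinct common neighbours of the non-adjacent pair $\{y,z_j\}$ inside $H$. The vertex $z$ qualifies, because $y\sim z$ in $H$ and $z\sim z_j$ by definition of the handle. The vertex $y_i$ also qualifies, because $y\sim y_i$ in $H$ and $y_i\sim z_j$ by our standing assumption. Finally, $z\ne y_i$ once more by the disjointness of the handles (or simply because $y_i\ne z$).

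This gives two distinct common neighbours of the non-adjacent pair $y,z_j$ in $H$, directly contradicting the fact that $H$ is a good cubic graph, i.e., contradicting Lemma \ref{localgraph1} applied inside $H$. There is no real obstacle in the argument; the entire content is the careful tracking of disjointness and non-adjacency, and the observation that the goodness hypothesis on $H$ is exactly what is needed to close the contradiction.
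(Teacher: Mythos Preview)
Your proof is correct and follows essentially the same approach as the paper: both derive a contradiction with the good-graph condition (Lemma~\ref{localgraph1}) by exhibiting two common neighbours of a non-adjacent pair in $H$. The only cosmetic difference is that you apply the condition to the pair $\{y,z_j\}$ while the paper applies it to the symmetric pair $\{y_i,z\}$; your version is slightly more explicit in verifying the needed non-adjacency and distinctness.
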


\begin{proof}
Let $H$ be the good cubic graph that $S$ is obtained from, and $yz$ be the edge removed from $H$. 
We assume that $y_1$ and $y_2$ are adjacent to $y$ while $z_1$ and $z_2$ are adjacent to $z$.

Now we can prove our claim: if $y_i$ and $z_j$ are adjacent then, in $H$, the vertices $y_i$ and 
$z$ have two common neighbours, $y$ and $z_j$. This contradicts the condition from Lemma 
\ref{localgraph1} that we imposed on good cubic graphs.
\end{proof}

We will need the following concept.

\begin{definition}
Let $S$ be a segment and $Y$ and $Z$ be its handles. The \emph{core} of $S$ with respect to the 
handle $Y$ is the set of vertices in $S\setminus(Y\cup Z)$ that are not adjacent to a vertex of 
$Y$. 
\end{definition}

The size of core depends on the type of the handle.

\begin{lemma}\label{SizeOfCore}
If $Y=\{y_1,y_2\}$ is a handle of a segment $S$ then $y_1$ and $y_2$ have no common neighbours in 
$S$. In particular, the core of $S$ with respect to $Y$ consists of six vertices, if $Y$ is an 
edge, and the core consists of four vertices, if $Y$ is a non-edge.
\end{lemma}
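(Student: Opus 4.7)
The plan is to first prove that $y_1$ and $y_2$ have no common neighbour in $S$, and then obtain the core sizes by a direct count. Let $H$ denote the good cubic graph from which $S$ arises. In $H$, the vertex $y$ has neighbours $z, y_1, y_2$, and $y$ and $z$ share no common neighbour in $H$ because the removed edge $yz$ lies in no $3$-clique of $H$. In particular, each $y_i$ has exactly two neighbours in $S$.

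For the common-neighbour claim, I would suppose for contradiction that some $w \in S$ is adjacent to both $y_1$ and $y_2$, and then split according to the type of the handle $Y=\{y_1,y_2\}$. When $Y$ is a non-edge, $y_1$ and $y_2$ are non-adjacent in $H$ and already have $y$ as a common neighbour, so $w$ would be a second common neighbour, contradicting Lemma \ref{localgraph1} applied to the good cubic graph $H$. The chief subtlety, and therefore the main obstacle, lies in the edge case, in which Lemma \ref{localgraph1} says nothing directly about the adjacent pair $\{y_1,y_2\}$. Here I would instead consider the pair $\{y,w\}$: since $w \in S$ we have $w \neq y,z$, and $w \neq y_1,y_2$ because $w$ is a common neighbour of both, so $w$ is not among the three $H$-neighbours $z, y_1, y_2$ of $y$; hence $y \not\sim w$ in $H$, yet $y_1$ and $y_2$ are two common neighbours of $y$ and $w$, once again violating Lemma \ref{localgraph1}.

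The core-size count then follows easily. We have $|S|=12$, and the two handles are disjoint (as noted just before the definition of a handle), so $|S \setminus (Y \cup Z)| = 8$. By Lemma \ref{segnonadj}, no neighbour of $y_1$ or $y_2$ in $S$ lies in $Z$, so the vertices of $S \setminus (Y \cup Z)$ that are adjacent to some member of $Y$ are precisely the $S$-neighbours of $y_1$ and of $y_2$ that lie outside $Y$. When $Y$ is a non-edge, each $y_i$ contributes both of its $S$-neighbours; when $Y$ is an edge, each $y_i$ contributes exactly one such neighbour (its other $S$-neighbour being the opposite end of $Y$). By the first part, the contributions from $y_1$ and $y_2$ are disjoint, giving $4$ and $2$ such vertices respectively, and subtracting from $8$ yields core sizes $4$ for non-edge handles and $6$ for edge handles, as claimed.
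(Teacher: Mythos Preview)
Your proof is correct and follows essentially the same approach as the paper. The only difference is that your case split on the type of $Y$ is unnecessary: the argument you give in the edge case (looking at the non-adjacent pair $\{y,w\}$ in $H$ and observing that $y_1,y_2$ are two common neighbours) works verbatim in the non-edge case as well, and this is exactly how the paper handles both cases uniformly.
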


\begin{proof}
Let $H$ be the good cubic graph from which $S$ was constructed by removing an edge $yz$. We 
assume that $y$ is adjacent to $y_1$ and $y_2$. Suppose that $y_1$ and $y_2$ have a common 
neighbour $s$ in $S$. Then $s$ is not adjacent to $y$ and at the same time, $s$ and $y$ have two 
common neighbours, $y_1$ and $y_2$. This is a contradiction since $H$ is a good cubic graph. 
Thus, $y_1$ and $y_2$ have no joint neighbours in $S$.

If $Y$ is an edge then $y_1$ has only one neighbour in $S$, other than $y_2$, and symmetrically, 
$y_2$ has only one neighbour in $S$, other than $y_1$. Note that, by Lemma \ref{segnonadj}, 
those neighbours are not in the second handle $Z$. Hence the core with respect to $Y$ is 
obtained by removing from $S$ the vertices in $Y$, $Z$ and the two neighbours of $Y$, leaving 
six vertices in the core.

If $Y$ is a non-edge then each of $y_1$ and $y_2$ has two further neighbours in $S$ and so the 
core is obtained in this case by removing $Y$, $Z$ and the four neighbours of $Y$. Hence the 
core is of size four.
\end{proof}

This allows us to attach numerical labels to the segments; namely, each segment $S$ will be 
labelled with a pair from $\{6,4\}\times\{6,4\}$, indicating the size of the cores with respect 
to the two handles. That is, type (1) above (edge and edge) becomes type $(6,6)$; type (2) 
(edge and non-edge) becomes $(6,4)$; type (3) (non-edge and edge) becomes type $(4,6)$; and 
finally, type (4) (non-edge and non-edge) becomes type $(4,4)$.

In fact, we can classify segments even more finely. For a segment $S$ with handles $X$ and $Y$, 
let the quadruple $(n_S,r_S,l_S,b_S)$ be defined as follows: $n_S=|S_0\setminus(C_X\cup C_Y)|$, 
$r_S=|C_Y\setminus C_X|$, $l_S=|C_X\setminus C_Y|$, and $b_S=|C_X\cap C_Y|$, where 
$S_0=S\setminus(X\cup Y)$, $C_X$ is the core of $S$ with respect to $X$, and $C_Y$ is the core 
of $S$ with respect to $Y$. Clearly, $n_S+r_S+l_S+b_S=|S_0|=8$, $l_S+b_S=|C_X|$, and
$r_S+b_S=|C_Y|$. So this quadruple can serve as a finer invariant, and we call it the 
\emph{quad type} of $S$.

The table below shows the number of segments of each possible quad type.
\begin{table}[H]
\centering
\begin{tabular}{|c|c|c|}
\hline
        type           &  quad type  & number of segments \\ \hline
\multirow{3}{*}{(6,6)} & $(2,0,0,6)$ & 0 \\ \cline{2-3} 
                       & $(1,1,1,5)$ & 5 \\ \cline{2-3} 
                       & $(0,2,2,4)$ & 14 \\ \hline
\multirow{3}{*}{(6,4)} & $(2,0,2,4)$ & 9 \\ \cline{2-3} 
                       & $(1,1,3,3)$ & 35 \\ \cline{2-3} 
                       & $(0,2,4,2)$ & 34 \\ \hline
\multirow{5}{*}{(4,4)} & $(4,0,0,4)$ & 4 \\ \cline{2-3} 
                       & $(3,1,1,3)$ & 23 \\ \cline{2-3} 
                       & $(2,2,2,2)$ & 146 \\ \cline{2-3} 
                       & $(1,3,3,1)$ & 102 \\ \cline{2-3} 
                       & $(0,4,4,0)$ & 28 \\ \hline
\end{tabular}
\caption{Quad types}
\label{quad type}
\end{table}

Note that there happens to be no segments of quad type $(2,0,0,6)$. Note also that we skipped 
the type $(4,6)$, which clearly has the dual (switching $r_S$ and $l_S)$ quad type 
distribution, compared to the type $(6,4)$.

\section{Gluing segments}

The idea of our approach is to investigate the union of three local subgraphs $G_1(x)$, $x\in 
Q$, where $Q$ is, as above, a maximal $3$-clique in $G$. Once we identify the graph induced on 
this set, we can produce the corresponding Gram matrix (since we assume that $G$ is realised 
as a set of unit vectors in $E$) and check this matrix for semi-positive definiteness hoping 
to eliminate in this way a great majority of all possibilities.

Before we discuss how to glue local subgraphs together, let us make a slight adjustment. 
Namely, we will show that it suffices to glue together the three segments arising from $Q$.

We will need the following result.

\begin{lemma} \label{LinearSum}
For $x\in G$, we have that $\sum_{y\in G_1(x)}y=4x$.
\end{lemma}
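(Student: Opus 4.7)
The plan is to lift the desired identity back to the ambient space $U = \mathbb{R}^v$, exploit the fact that the orthogonal projection $p_4 : U \to E$ commutes with the adjacency matrix $A$, and then rescale to unit vectors.

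Identifying vertices with the standard basis of $U$, one has the elementary identity $A u_x = \sum_{y \in G_1(x)} u_y$. Since $A$ is symmetric and the eigenspace decomposition $U = U_k \oplus U_r \oplus U_s$ is orthogonal and $A$-invariant, the projection $p_4$ onto $E = U_4$ commutes with $A$ (equivalently, $p_4$ is a polynomial in $A$). Applying $p_4$ to both sides therefore yields
$$\sum_{y \in G_1(x)} p_4(u_y) \;=\; p_4(A u_x) \;=\; A\, p_4(u_x) \;=\; 4\, p_4(u_x),$$
where the last equality uses $p_4(u_x) \in U_4$. To pass to the unit vectors $e_x = p_4(u_x)/|p_4(u_x)|$, I would invoke Theorem \ref{depends on distance} with $i = j$: the quantity $|p_4(u_x)|^2 = p_4(u_x) \cdot p_4(u_x)$ depends only on the trivial distance $0$, hence is a constant $c^2$ independent of $x$. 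Dividing the displayed equation by $c$ gives $\sum_{y \in G_1(x)} e_y = 4 e_x$, as required.

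The main obstacle is essentially trivial: one only needs to notice that $p_4$ and $A$ commute, and that the lengths $|p_4(u_x)|$ are all equal. Both are standard, the second being a free byproduct of Theorem \ref{depends on distance}. An alternative, self-contained route would be to set $\delta_x := \sum_{y \in G_1(x)} e_y - 4 e_x$ and verify $\delta_x \cdot e_z = 0$ for each $z \in G$, splitting into the cases $z = x$, $z \sim x$, $z \not\sim x$; each case evaluates to $0$ after a short count using $k = 14$, $\lambda = 3$, $\mu = 2$ together with the cosine values $w_0 = 1$, $w_1 = 2/7$, $w_2 = -1/14$ from Corollary \ref{cosine sequence}. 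Since the $e_z$ span $E$, this forces $\delta_x = 0$. This is really the same calculation as above with $p_4$ hidden.
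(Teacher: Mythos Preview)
Your main argument is correct but follows a genuinely different route from the paper. The paper works entirely inside $E$: it sets $w=\sum_{y\in G_1(x)}y$ and computes directly, using only the cosine sequence and the valency/parameter data, that $w\cdot w=16$ and $w\cdot x=4$, whence $(w-4x)\cdot(w-4x)=0$ and positive definiteness forces $w=4x$. Your approach instead lifts to $U=\RR^v$, uses $Au_x=\sum_{y\in G_1(x)}u_y$, commutes the projection $p_4$ past $A$, and then rescales by the common length $|p_4(u_x)|$. This is more conceptual and explains transparently why the constant is the eigenvalue $\th=4$; indeed your argument immediately generalises to $\sum_{y\in G_1(x)}y=\th x$ in any eigenspace $U_\th$ of any regular graph. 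The paper's computation, by contrast, is tied to the specific parameters but is entirely self-contained within the Euclidean representation and needs nothing beyond Corollary~\ref{cosine sequence}. Your proposed alternative (showing $\delta_x\cdot e_z=0$ for every $z$) is in the same spirit as the paper's proof but slightly more laborious: the paper shortcuts this by computing $\delta_x\cdot\delta_x$ in one stroke rather than splitting into three cases over a spanning set.
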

    
\begin{proof}
Let $w=\sum_{y\in G_1(x)}y$. Using Corollary \ref{cosine sequence}, we have that $w\cdot w
=14(1+3(\frac{2}{7})+10(-\frac{1}{14}))=14+12-10=16$ and $w\cdot x=x\cdot w=14(\frac{2}{7})
=4$. In the first calculation, we used that every $y\in G_1(x)$ has $3$ neighbours and $10$ 
non-neighbours in $G_1(x)$. 

Hence, for $t=w-4x$, we have that $t\cdot t=(w-4x)\cdot(w-4x)=w\cdot w-4w\cdot x-4x\cdot w
+16x\cdot x=16-16-16+16=0$. This means that $t=0$, that is, $w=4x$, as claimed.
\end{proof}

As we already stated, we would like to recover the induced graph structure on the union 
$\cup_{x\in Q}G_1(x)$ for a maximal $3$-clique $Q$. Clearly, this set is the union of $Q$ 
and the three segments $S_x:=G_1(x)\setminus Q$, $x\in Q$. Let $s_x:=\sum_{u\in S_x}u$. 
Then we have the following vector equations.

\begin{lemma}\label{ABClindep}
If $Q=\{x,y,z\}$ then $x=\frac{1}{10}(3s_x+s_y+s_z)$, $y=\frac{1}{10}(s_x+3s_y+s_z)$, and 
$z=\frac{1}{10}(s_x+s_y+3s_z)$.
\end{lemma}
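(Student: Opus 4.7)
The plan is to reduce the claim to a small, completely explicit linear algebra calculation by first expressing each of $s_x, s_y, s_z$ in terms of the vertex vectors $x,y,z$ using Lemma \ref{LinearSum}, and then inverting the resulting $3\times 3$ system.

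More precisely, since $Q=\{x,y,z\}$ is a $3$-clique, we have $y,z\in G_1(x)$, so $G_1(x)$ is the disjoint union of $\{y,z\}$ and $S_x=G_1(x)\setminus Q$. Applying Lemma \ref{LinearSum} at $x$ and rearranging gives
\[
s_x=\sum_{u\in S_x}u=\Bigl(\sum_{u\in G_1(x)}u\Bigr)-y-z=4x-y-z,
\]
and by symmetry $s_y=-x+4y-z$ and $s_z=-x-y+4z$. So, writing $\mathbf{q}=(x,y,z)^{\!\top}$ and $\mathbf{s}=(s_x,s_y,s_z)^{\!\top}$ (as column vectors of elements of $E$), we obtain the matrix equation $\mathbf{s}=M\mathbf{q}$, where $M=5I_3-J_3$ with $J_3$ the $3\times 3$ all-one matrix.

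The second step is to invert $M$. Using $J_3^2=3J_3$, one checks that $M^{-1}=\tfrac{1}{5}I_3+\tfrac{1}{10}J_3$, so that $\mathbf{q}=M^{-1}\mathbf{s}$ reads exactly as the three formulae in the statement. Equivalently, and perhaps more transparently, one can verify directly that
\[
\tfrac{1}{10}\bigl(3s_x+s_y+s_z\bigr)=\tfrac{1}{10}\bigl(3(4x-y-z)+(-x+4y-z)+(-x-y+4z)\bigr)=x,
\]
and the analogous identities for $y$ and $z$ follow by cyclic symmetry.

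There is no real obstacle here: the only input beyond elementary bookkeeping is Lemma \ref{LinearSum}, which was already proved via the Euclidean representation and the cosine sequence. The mild point worth noting is that the identities hold as vector equations in $E$, not merely as formal combinations, which is what makes them useful in subsequent Gram matrix arguments for recovering $x,y,z$ from the data attached to the three segments $S_x,S_y,S_z$.
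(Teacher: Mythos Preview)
Your proof is correct and follows essentially the same route as the paper: both derive the three equations $s_x=4x-y-z$, $s_y=-x+4y-z$, $s_z=-x-y+4z$ from Lemma~\ref{LinearSum} and then solve this $3\times 3$ linear system for $x,y,z$. You supply more detail on the inversion (via $M=5I_3-J_3$ and a direct check), whereas the paper simply states that solving the system yields the claim.
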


\begin{proof}
By Lemma \ref{LinearSum}, we have:
\begin{align*}
s_x+y+z&=4x\\
s_y+x+z&=4y\\
s_z+x+y&=4z 
\end{align*}
Solving this linear system for $x$, $y$ and $z$ yields the claim.
\end{proof}

According to this lemma, the span of $T:=\cup_{x\in Q}S_x$ includes $Q$ and so the rank of the 
Gram matrix on $T\cup Q$ is the same as the rank of the Gram matrix on $T$. We will therefore 
work with just $T$, ignoring the three vertices from $Q$.

Next we discuss how we can recover the graph structure on $T$, which includes how the segments 
$S_x$, $x\in Q$, intersect, which edges $T$ inherits from these segments, and which edges in 
$T$ are extra.

We start with the intersection of segments. 

\begin{lemma} \label{intersection}
For $x,y\in Q$, $x\neq y$, we have that $Z:=S_x\cap S_y$ is a handle in both segments $S_x$ 
and $S_y$.
\end{lemma}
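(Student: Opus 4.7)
The plan is to identify $Z=S_x\cap S_y$ explicitly as a set of common neighbours of $x$ and $y$ in $G$, and then match this set to the handle of each segment coming from the construction in Section \ref{segs}.

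First I would unpack the definitions. By construction, $S_x=G_1(x)\setminus Q=G_1(x)\setminus\{y,z\}$ and similarly $S_y=G_1(y)\setminus\{x,z\}$, so $Z=S_x\cap S_y$ consists of those vertices of $G$ that are adjacent to both $x$ and $y$ and different from $x,y,z$. Since $x\sim y$ and $\lm=3$, the vertices $x$ and $y$ have exactly three common neighbours in $G$, one of which is $z\in Q$. Therefore $|Z|=2$.

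Next I would match $Z$ to a handle of $S_x$. Recall that $S_x$ was obtained from the good cubic graph $H=G_1(x)$ by deleting the edge $yz$, and that the two handles of $S_x$ are $\{y_1,y_2\}$ (the two neighbours of $y$ in $H$ other than $z$) and $\{z_1,z_2\}$ (the two neighbours of $z$ in $H$ other than $y$). But the neighbours of $y$ in $H=G_1(x)$ are precisely the common neighbours of $x$ and $y$ in $G$, so the handle $\{y_1,y_2\}$ of $S_x$ equals the set of common neighbours of $x,y$ distinct from $z$, which is exactly $Z$. By the symmetric argument with the roles of $x$ and $y$ swapped, $Z$ is also a handle of $S_y$ (namely the one corresponding to the vertex $x$).

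There is no real obstacle here; the only thing one has to be slightly careful about is that $Z$ genuinely has two elements and not fewer (which requires $z$ to account for exactly one of the three common neighbours guaranteed by $\lm=3$, a consequence of $Q$ being a clique) and that the two elements of $Z$ truly appear as neighbours of $y$ inside $G_1(x)$ rather than somewhere else, which is immediate since being a common neighbour of $x$ and $y$ is the same as lying in $G_1(x)\cap G_1(y)$.
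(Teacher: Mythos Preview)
Your proof is correct and follows essentially the same approach as the paper's: both identify the handle of $S_x$ associated to $y$ as the set of neighbours of $y$ in $G_1(x)$ other than $z$, and then check that this coincides with $S_x\cap S_y$. The only difference is cosmetic---you additionally verify $|Z|=2$ via $\lambda=3$, while the paper instead phrases the identification as $S_x\cap G_1(y)=S_x\cap S_y$ by observing that $G_1(y)\setminus S_y=\{x,z\}$ misses $S_x$.
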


\begin{proof}
If $z$ is the third vertex in $Q$ then $S_x$ is obtained from $G_1(x)$ by removing the edge 
$yz$. Hence $S_x\cap G_1(y)$ is a handle in $S_x$. It remains to see that $S_x\cap G_1(y)=Z$. 
On the one hand, $Z=S_x\cap S_y$ is clearly contained in $S_x\cap G_1(y)$. On the other hand, 
the difference between $S_y$ and $G_1(y)$ is the edge $xz$. Both $x$ and $z$ are not 
contained in $S_x$, and hence, indeed, $S_x\cap G_1(y)=S_x\cap S_y=Z$. 

Symmetrically, $Z$ is also a handle in $S_y$.
\end{proof}

Let us now fix the notation we will use in the remainder of the paper. Let $Q=\{x,y,z\}$ and 
we set $Z:=S_x\cap S_y$, as above, and symmetrically, $Y:=S_x\cap S_z$ and $X:=S_y\cap S_z$. 
This is shown in Figure \ref{three segments}.
\begin{figure}[ht] 
\centering
\includegraphics[width=8cm]{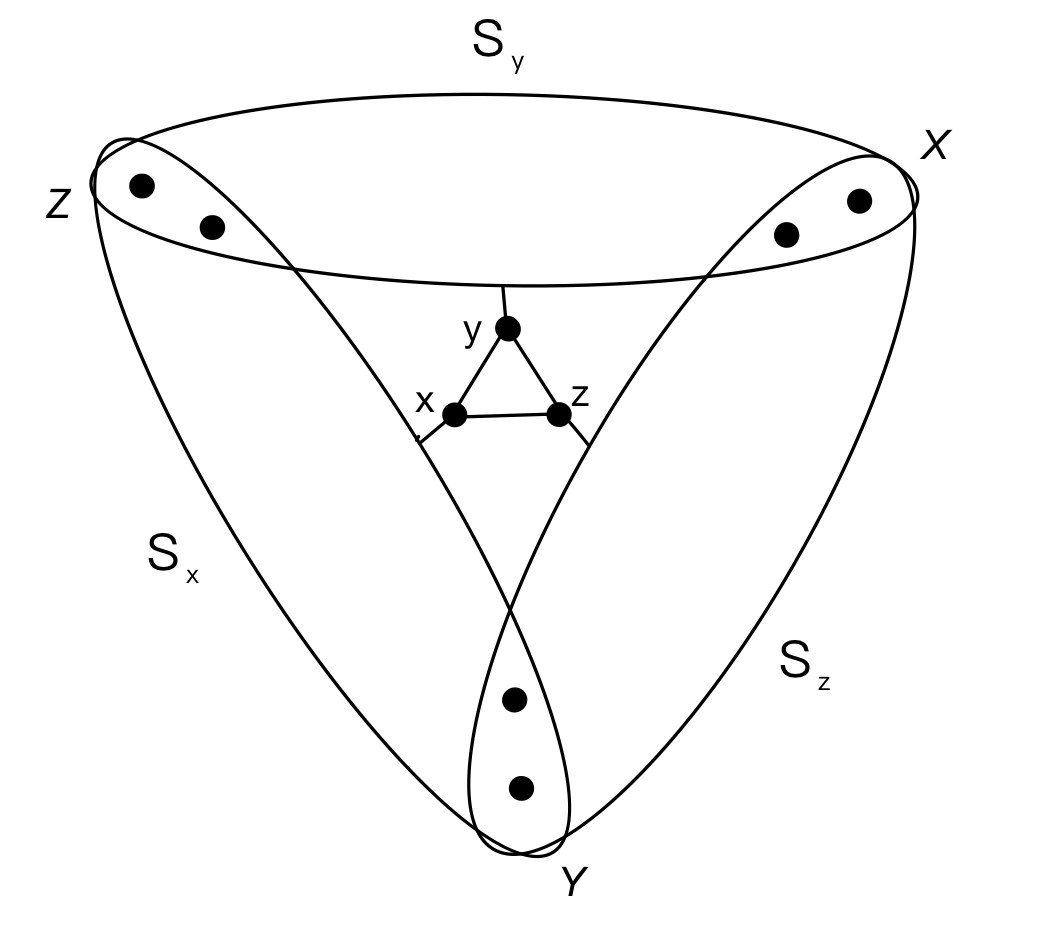}
\caption{The composition of $T$}
\label{three segments}
\end{figure}

Our next result concerns the additional edges on $S_x\cup S_y$, or equally, any other union 
of two segments above. Recall from Section \ref{segs} the concept of the core of a segment 
with respect to a handle.

\begin{lemma} \label{matching}
Let $C_x\subset S_x$ be the core of $S_x$ with respect to $Z=S_x\cap S_y$ and, 
symmetrically, let $C_y\subset S_y$ be the core of $S_y$ with respect to $Z$. Then every 
edge within $S_x\cup S_y$, that is not fully contained in $S_x$ or $S_y$, connects a vertex 
from $C_x$ with a vertex from $C_y$. Furthermore, such edges form a matching between $C_x$ 
and $C_y$.
\end{lemma}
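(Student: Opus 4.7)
The key observation is that for a cross-edge $uv$ with $u\in S_x\setminus S_y$ and $v\in S_y\setminus S_x$, the vertex $u$ is non-adjacent to $y$ (since $u\notin G_1(y)$), and so by $\mu=2$ the pair $\{u,y\}$ has exactly two common neighbours in $G$. One of them is $x$ (since $x\sim y$ and $u\in S_x$ forces $u\sim x$) and the other is $v$ itself (since $v\sim u$ by assumption and $v\sim y$ as $v\in S_y$). Thus $\{x,v\}$ exhausts the entire common-neighbour budget of $\{u,y\}$, and this is the lever on which the whole argument rests.

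From this single fact I would derive in three short substeps that $u\in C_x$. First, $u\notin Z$ is immediate since $Z\subseteq S_y$ whereas $u\notin S_y$. Second, $u$ cannot lie in the other handle $Y=S_x\cap S_z$: if $u\sim z$, then $z$ would be a distinct third common neighbour of $u$ and $y$ (using $z\in Q$, hence $z\sim y$), contradicting $\mu=2$. Third, $u$ cannot be adjacent to any $w\in Z$: any such $w$ satisfies $w\sim y$ (since $w\in Z\subseteq S_y$), making $w$ a common neighbour of $u$ and $y$, and thus $w\in\{x,v\}$; but $w\in S_x$ rules out $w=v$, while $w\in S_y$ rules out $w=x$. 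Hence $u\in S_x\setminus(Y\cup Z)$ and $u$ has no neighbour in $Z$, i.e., $u\in C_x$. The symmetric argument, interchanging the roles of $x$ and $y$, gives $v\in C_y$.

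For the matching assertion, I would suppose a single $u\in C_x$ had two distinct neighbours $v_1,v_2\in S_y\setminus S_x$. Each $v_i$ is a common neighbour of $u$ and $y$ by the argument above, and together with $x$ this furnishes three distinct common neighbours of the non-adjacent pair $\{u,y\}$, once again contradicting $\mu=2$. The same bound applies symmetrically at each $v\in C_y$, which yields the matching.

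Nothing in this plan requires serious calculation; the only point that needs care in the write-up is disciplined bookkeeping of the case analysis, making explicit in each substep which of the two ``slots'' in the common neighbourhood of $\{u,y\}$ is being occupied, and using Lemma \ref{segnonadj} implicitly through the fact that candidate common neighbours are already forced into $Q\cup\{v\}$.
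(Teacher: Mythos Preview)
Your argument follows the same line as the paper's---both pivot on the common neighbourhood of $\{u,y\}$ and the constraint $\mu=2$---but you have left out half of the matching claim. You establish only that each $u\in C_x$ has \emph{at most} one cross-neighbour in $C_y$ (and symmetrically). The paper, however, proves that each such $u$ has \emph{exactly} one, so that the cross-edges form a \emph{perfect} matching between $C_x$ and $C_y$. This is the intended reading of ``a matching between $C_x$ and $C_y$'' and is essential for the later enumeration, where the number of such matchings is counted as $|C_x|!$.

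The missing step is short and uses the very lever you identified. For $u\in C_x$, the non-adjacent pair $\{u,y\}$ has exactly two common neighbours in $G$; one is $x$. Since $u\notin Y$ we have $u\not\sim z$, and by the definition of the core $u$ has no neighbour in $Z$; hence the second common neighbour cannot lie in $Q\cup Z$ and must lie in $S_y\setminus Z$. This furnishes the required cross-edge from $u$, and by your first part its other endpoint lies in $C_y$.
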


\begin{proof}
Let $u\in S_x$. If $u\in Z$ then every edge from $u$ to a vertex of $S_y$ is contained in 
$S_y$. If $u$ is contained in the second handle, $Y$, of $S_x$ then $u$ and $y$ already 
have $\mu=2$ common neighbours, namely, $x$ and $z$, and so $u$ has no neighbours in $S_y$. 
Similarly, if $v\in S_x\setminus(Y\cup Z)$ but $v$ is adjacent to a vertex $z_i$ in $Z$ then 
the common neighbours of $u$ and $y$ are $x$ and $z_i$, and so again $u$ has no further 
neighbours in $S_y$. Hence only the vertices from $C_x$ can have further neighbours in 
$S_y$. Symmetrically, only vertices from $C_y$ can have further edges in $S_x$. We have 
shown that the edges on $S_x\cup S_y$ that are not fully in $S_x$ or $S_y$ connect $C_x$ 
with $C_y$. 

Now let $u\in C_x$. Then $x$ is a common neighbour of $v$ and $y$. Since $\mu=2$ and since 
$u$ is not adjacent to $Z$, it must have exactly one neighbour in $S_y$, and by the above, 
this unique neighbour is in $C_y$. Thus, every vertex in $C_x$ has a unique neighbour in 
$C_y$ and, symmetrically, every vertex of $C_y$ has a unique neighbour in $C_x$; that is, we 
have a matching between $C_x$ and $C_y$, as claimed.
\end{proof}

Needless to say, we have similar matchings between the corresponding cores in $S_x$ and 
$S_z$ and in $S_y$ and $S_z$.

\medskip
The process of forming $T=S_x\cup S_y\cup S_z$ consists of several steps. We first glue 
together possible segments $S_x$ and $S_y$ by identifying corresponding handles, which 
become $Z=S_x\cap S_y$. Then we select and add a matching between the cores of $S_x$ and 
$S_y$ with respect to $Z$. Clearly, the segments we identify must be of the same type (edges 
or non-edges) and then the two cores do indeed have the same size and matchings are possible. 
We note that matchings between two sets of size $m$ are in a natural correspondence with the 
elements of the symmetric group $S_m$, and so we have $|S_6|=720$ possible matchings if $Z$ 
is an edge, and $|S_4|=24$ possible matchings if $Z$ is a non-edge.

At this point, after choosing a particular matching, we already known the induced graph on 
$S_x\cup S_y$ and so we can write the Gram matrix corresponding to the set of unit vectors 
$S_x\cup S_y$ in $E$. This could, in principle, lead to elimination of some configurations 
due to negative eigenvalues of their Gram matrices. However, as we discovered 
computationally, all double unions survive this criterion, and this is why we are forced to 
consider a larger triple union of segments. Hence, the next step is to glue in a third 
possible segment $S_z$ by identifying its two handles with the remaining unmatched handles in 
$S_x$ and $S_y$. This is followed by a selection of matchings between the cores in $S_x$ and 
$S_z$ with respect to $Y=S_x\cap S_z$ and between the cores in $S_y$ and $S_z$ with respect 
to $X=S_y\cap S_z$. This completes forming of the induced graph on $T$ and so the whole Gram 
matrix on $T$ can be formed and checked for negative eigenvalues. Since $|T|=30$ is close to 
the dimension $34$ of the ambient Euclidean space $E$, this check is now quite powerful and 
eliminates a great majority of cases.

\medskip
There are, of course, a lot of technical details concerning this process, which we will 
provide later. In the remainder of this section we discuss gluing two graphs over a common 
subgraph.

\begin{definition}
A \emph{gluing} of graphs $A$ and $B$ over isomorphic subgraphs $H_A\subseteq A$ and 
$H_B\subseteq B$ is an isomorphism $\phi:H_A\to H_B$.
\end{definition}

A gluing produces a new graph by taking the union of $A$ and $B$ and, furthermore, 
identifying every vertex $h$ of $H_A$ with the corresponding vertex $\phi(h)$ of $H_B$. Hence 
$H_A$ and $H_B$ merge into the intersection $H=A\cap B$. Note that the edges of the glued 
graph all come from the edges of $A$ and $B$. Moreover, since $\phi$ is a graph isomorphism, 
edges of $A$ within $H_A$ merge with the corresponding edges of $B$ within $H_B$, so we do 
not end up with double edges.

Clearly, $H_A$ and $H_B$ must indeed be isomorphic or else no gluing is possible. We will 
however focus on a different question: how many different graphs can we obtain by gluing the 
given $A$ and $B$ over the given isomorphic $H_A$ and $H_B$? This is answered by the 
following proposition, adapted from statement (2.7) in \cite{Goldschmidt} to the 
graph-theoretic context. Instead, of gluing $H_A$ directly with $H_B$, we can introduce an 
independent copy $H$ of this graph and glue it onto $H_A$ and $H_B$ via all possible 
isomorphisms $\psi_A:H\to H_A$ and $\psi_B:H\to H_B$. This point of view stresses symmetry 
between $A$ and $B$ while being equivalent to our original gluing construction. In what 
follows we fix arbitrary isomorphisms $\gm_A:H\to H_A$ and $\gm_B:H\to H_B$.

Let $\Aut(A,H_A)$ be the group of all automorphisms of $A$ leaving $H_A$ invariant. 
Symmetrically, let $\Aut(B,H_B)$ be the group of all automorphisms of $B$ stabilising $H_B$. 
These automorphisms can be transferred into $\Aut(H)$ by conjugating with $\gm_A$ and 
$\gm_A$, respectively, giving us subgroups $\Aut(A,H_A)^{\gm_A}$ and $\Aut(B,H_B)^{\gm_B}$ 
of $\Aut(H)$. (Naturally, every element of $\Aut(A,H_A)$ is first restricted to $H_A$ and 
then conjugated by $\gm_A$, and similarly, for the elements of $\Aut(B,H_B)$.)

\begin{proposition} \label{gluing}
The number of non-isomorphic graphs formed by gluing $A$ and $B$ with respect to isomorphic 
subgraphs $H_A$ and $H_B$ coincides with the number of double cosets in the group $\Aut(H)$ 
of its subgroups $\Aut(A,H_A)^{\gm_A}$ and $\Aut(B,H_B)^{\gm_B}$.
\end{proposition}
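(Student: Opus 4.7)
The plan is to set up a bijection between gluings and elements of $\Aut(H)$, and then to show that two gluings yield isomorphic graphs (via an isomorphism preserving the distinguished decomposition into the $A$-part and the $B$-part) precisely when the corresponding elements of $\Aut(H)$ lie in the same double coset of $\Aut(B,H_B)^{\gm_B}$ and $\Aut(A,H_A)^{\gm_A}$. The statement then drops out by counting.

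First, using the fixed reference isomorphisms $\gm_A\colon H\to H_A$ and $\gm_B\colon H\to H_B$, every isomorphism $\phi\colon H_A\to H_B$ can be written uniquely as $\phi=\gm_B\circ\sigma\circ\gm_A^{-1}$ for some $\sigma\in\Aut(H)$. This gives a bijection between the set of all gluings and the group $\Aut(H)$; write $G_\sigma$ for the graph obtained from the gluing $\phi_\sigma$. Next, I would suppose $\Phi\colon G_{\sigma_1}\to G_{\sigma_2}$ is an isomorphism that restricts to a bijection $A\to A$ and $B\to B$ (this is the correct notion of isomorphism between amalgams, following Goldschmidt). Restricting $\Phi$ gives $a\in\Aut(A)$ and $b\in\Aut(B)$, and since the image of $H_A$ under $a$ must coincide with the image of $H_B$ under $b$ inside $G_{\sigma_2}$, we obtain $a\in\Aut(A,H_A)$ and $b\in\Aut(B,H_B)$. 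Conjugating their restrictions to the common subgraph by $\gm_A$ and $\gm_B$ respectively produces elements $\alpha\in\Aut(A,H_A)^{\gm_A}$ and $\beta\in\Aut(B,H_B)^{\gm_B}$, and the compatibility condition $\phi_{\sigma_2}\circ a|_{H_A}=b|_{H_B}\circ\phi_{\sigma_1}$ rewrites, after the substitution $\phi_{\sigma_i}=\gm_B\sigma_i\gm_A^{-1}$, as $\sigma_2=\beta\,\sigma_1\,\alpha^{-1}$, placing $\sigma_1$ and $\sigma_2$ in the same $(\Aut(B,H_B)^{\gm_B},\Aut(A,H_A)^{\gm_A})$-double coset.

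For the converse I would start from $\sigma_2=\beta\sigma_1\alpha^{-1}$ with $\alpha,\beta$ in the stated conjugated subgroups. By definition of these subgroups, $\alpha=\gm_A^{-1}\circ a|_{H_A}\circ\gm_A$ for some $a\in\Aut(A,H_A)$ and $\beta=\gm_B^{-1}\circ b|_{H_B}\circ\gm_B$ for some $b\in\Aut(B,H_B)$. A direct check using the defining equation for $\phi_\sigma$ shows that $a$ on $A$ and $b$ on $B$ agree on the identified subgraph $H_A\sim H_B$ inside $G_{\sigma_1}$, so they assemble into a well-defined isomorphism $\Phi\colon G_{\sigma_1}\to G_{\sigma_2}$. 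Thus the equivalence relation on $\Aut(H)$ coming from isomorphism of glued graphs is exactly the double coset relation, and Proposition \ref{gluing} follows.

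The main obstacle is this converse direction in the second paragraph: one must check carefully that the two automorphisms $a$ and $b$, which are a priori defined on the disjoint pieces $A$ and $B$, actually agree on the identified subgraph when the identification is carried out via two different gluings $\phi_{\sigma_1}$ and $\phi_{\sigma_2}$. The normalisation by the reference isomorphisms $\gm_A$ and $\gm_B$, together with the identity $\sigma_2=\beta\sigma_1\alpha^{-1}$, is precisely what makes the compatibility work, and it is important to unwind the conjugation conventions correctly so that left multiplication on $\sigma$ corresponds to action on the $B$-side and right multiplication to action on the $A$-side.
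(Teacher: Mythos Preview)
The paper does not actually prove this proposition: it is stated as an adaptation of statement~(2.7) in Goldschmidt~\cite{Goldschmidt} and taken for granted, with only the accompanying remark that ``isomorphism'' here means an isomorphism preserving $A$ and $B$ set-wise. Your argument is correct and is exactly the standard proof of the Goldschmidt amalgam-counting lemma: parametrise gluings by $\Aut(H)$ via $\phi_\sigma=\gm_B\sigma\gm_A^{-1}$, and show that the compatibility condition $\phi_{\sigma_2}\circ a|_{H_A}=b|_{H_B}\circ\phi_{\sigma_1}$ for an $(A,B)$-preserving isomorphism is equivalent to $\sigma_2=\beta\sigma_1\al^{-1}$ with $\al\in\Aut(A,H_A)^{\gm_A}$ and $\beta\in\Aut(B,H_B)^{\gm_B}$. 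The only point worth adding explicitly is that the number of $(K,L)$-double cosets equals the number of $(L,K)$-double cosets (via inversion), so the order in which the two subgroups are listed in the statement is immaterial; this matches your relation $\sigma_2=\beta\sigma_1\al^{-1}$ regardless of which side one assigns to which subgroup.
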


Note that here we only consider isomorphisms between resulting glued graphs that preserve $A$ 
and $B$ set-wise. In principle, there could be further isomorphisms, for example, the ones 
switching $A$ and $B$ or even more general ones. These are not accounted for in the above 
proposition. However, this will be irrelevant for our purposes. The final remark is that the 
complete set of different gluings $\phi:H_A\to H_B$ is given by $\{\gm_B\al_i\gm_A^{-1}\mid 
i=1,2,\ldots k\}$, where $k$ is the number of double cosets above and $\al_1,\al_2,\ldots,
\al_k$ are representatives of the double cosets.

In our enumeration, the above proposition is applied to gluing the segments $S_x$ and $S_y$ 
over the respective handles. As we already mentioned, the handles must be of the same type: 
either both edges or both non-edges. Note that in either case $\Aut(H)$ is of order $2$ and 
the number of double cosets is one if the two vertices of the glued handle can be switched in 
the (handle-preserving) automorphism group of either of the two segments $S_x$ and $S_y$. If 
the two vertices cannot be switched then the number of double cosets is two. Clearly, this 
gives us an easy way to enumerate all \emph{segment pairs} (graphs obtained by gluing $S_x$ 
and $S_y$) up to isomorphism. 

In order to state the result of the enumeration, recall that we dropped all the segments of 
type $(4,6)$, and we are always gluing the first handles from both segments. Therefore, a 
segment pair can only be made of two segments of type $(6,6)$, or a segment of type $(6,6)$ 
and a segment of type $(6,4)$, or two segments of type $(6,4)$, or two segments of type 
$(4,4)$. 

The following statement, established by enumeration, gives us the number of different segment 
pairs.

\begin{proposition} \label{segment pair numbers}
There are $86333$ different segment pairs in total. Among them, there are $281$ segment pairs 
made of two segments of type $(6,6)$, $2249$ segment pairs made of a segment of type $(6,6)$ 
and a segment of type $(6,4)$, $4851$ segment pairs made of two segments of type $(6,4)$, and 
$78952$ segment pairs made of two segments of type $(4,4)$.
\end{proposition}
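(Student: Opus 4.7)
The plan is to enumerate compatible pairs of segments by type and apply Proposition~\ref{gluing} to count the gluings in each case. Since we always glue first handles and we have dropped type $(4,6)$, the first handle of a segment of type $(6,6)$ or $(6,4)$ is an edge, while the first handle of a segment of type $(4,4)$ is a non-edge. Hence the only compatible pairings are $(6,6)$--$(6,6)$, $(6,6)$--$(6,4)$, $(6,4)$--$(6,4)$, and $(4,4)$--$(4,4)$, which are precisely the four categories in the statement.

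For each such pair of segment isomorphism classes, Proposition~\ref{gluing} applies with $H$ the common handle. Since $H$ has only two vertices, $\Aut(H)$ has order $2$, generated by the transposition $\tau$ swapping the two handle vertices. For a given segment $S$ with first handle $H_A$, the subgroup $\Aut(S,H_A)^{\gm_A}\leq\Aut(H)$ is either trivial or the full $\Aut(H)$, depending on whether some handle-preserving automorphism of $S$ realises $\tau$ on the handle; call such $S$ \emph{swap-active}. A direct double-coset computation then shows that a compatible pair $(S,S')$ contributes exactly one gluing when at least one of $S,S'$ is swap-active, and two gluings otherwise.

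I would implement this enumeration in GAP, using the pre-computed list of $478$ segments together with their handle-preserving automorphism groups. For each segment I would first record whether it is swap-active on its first handle; then I would iterate over the $19$ segments of type $(6,6)$, the $78$ of type $(6,4)$, and the $303$ of type $(4,4)$, classifying each compatible pair via the swap-active dichotomy and summing by type. This would produce the four subtotals $281,2249,4851,78952$ and the grand total $86333$.

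The main obstacle is the scale of the $(4,4)$--$(4,4)$ sub-enumeration, which alone involves on the order of $303^2/2 \approx 46000$ pairs and dominates the runtime. Additional care is needed to treat diagonal pairs $S=S'$ correctly and to ensure consistent handle orientations when invoking Proposition~\ref{gluing}; beyond these issues, the enumeration is mathematically routine.
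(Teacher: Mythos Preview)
Your proposal is correct and follows essentially the same approach as the paper: the proposition is stated there simply as ``established by enumeration'', and the surrounding discussion spells out exactly the ingredients you use, namely that $\Aut(H)$ has order $2$, that the number of double cosets is $1$ or $2$ depending on whether a handle-swapping automorphism exists in either segment, and that this yields an easy GAP enumeration over the segment list. Your identification of the four compatible type pairings and the swap-active dichotomy matches the paper's reasoning precisely.
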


From this statement, we see that the last type of segment pairs is by far the most numerous. 
However, this is offset by the fact that the number $24$ of possible matchings between the 
cores of $S_x$ and $S_y$ arising in this case is much smaller than the number $720$ of 
matchings required for the first three types. Hence the four types of segment pairs are in 
fact reasonably balanced in computational terms.

\medskip
We next discuss the algorithmic details of all these processes.

\section{Steps 1 and 2: Building up $T$}\label{sec6}

We keep all possible segments in a list according to the type of the segment. First, we have 
the $19$ segments of type $(6,6)$, then the $78$ segments of type $(6,4)$, and finally, the 
$303$ segments of type $(4,4)$.

Each segment $S$ is stored with its vertices ordered in a particular way. First, we have the 
two handles, $X$ and $Y$, then the vertices from $S_0=S\setminus(X\cup Y)$ not contained in 
the cores $C_X$ and $C_Y$, then the vertices in $C_Y\setminus C_X$, then $C_X\setminus C_Y$, 
and finally the vertices from $C_X\cap C_Y$. This is shown in the following table, where the 
numbers in the third column indicate the size of the part. Note that the last four numbers 
constitute the quad type of $S$.
\begin{table}[H]
\centering
\begin{tabular}{|c|c|c|}
 \hline
 & $X$ & 2\\ \hline
 & $Y$ & 2\\ \hline
 none & $S\setminus(X\cup Y\cup C_X\cup C_Y)$ & $n_S$ \\ \hline
 right & $C_Y\setminus C_X$ & $r_S$ \\ \hline
 left & $C_X\setminus C_Y$ & $l_S$ \\ \hline
 both & $C_X\cap C_Y$ & $b_S$ \\ \hline
\end{tabular}
\caption{Vertices in a segment}
\label{quad type figure}
\end{table}

The first column indicates the meaning of the part: the two handles do not require explanation, 
the following part consists of vertices in neither of the two cores, hence the label `none'; 
the label `right' indicates that those vertices belong only in the core for the second handle; 
`left' means that these are in the core for the first handle; finally, `both' indicates that 
these vertices are in both cores. (`Left'/`right' was a useful mnemonic that we adopted during 
this project.)

When we enumerate segment pairs, we always assume that $S_x$ either precedes $S_y$ on the list 
or $S_x$ is the same segment as $S_y$. This guarantees that we do not overcount segment pairs.
As we have already mentioned, we glue the first handle of $S_x$ onto the first handle of 
$S_y$. 

\medskip
Once we select a specific segment pair from our list of $86333$, we need to go at Step 1 
through all possible matchings between the cores of $S_x$ and $S_y$ with respect to the handle 
$Z=S_x\cap S_y$, the first (`left') handle in both segments. These two cores are identified 
within the corresponding segment records as the union of the parts marked with `left' and 
`both', i.e., the cores are at the end of the record for both segments. Note that the two 
cores are of the same cardinality $c\in\{4,6\}$, as $Z$ has the same type in $S_x$ and $S_y$.

Recall that our method involves checking the Gram matrix on an ever growing set of vertices. 
In other words, once we know all edges within the subset, i.e., the Gram matrix on this 
subset is fully known, we immediately want to check this Gram matrix for semi-positive 
definiteness. In principle, such a check becomes more and more expensive as the subset 
becomes bigger. However, as we build the set up by adding one vertex at a time, this amounts 
to adding one new row (and column) to the Gram matrix. Fortunately, the LDLT algorithm 
\cite{NumRecipesAoSC}, which we use to verify absence of negative eigenvalues, is iterative 
exactly in this sense, and so we can significantly save on time by doing just the iterative 
part corresponding to adding one row. The details of this realisation of the LDLT algorithm 
are in Appendix \ref{appendix LDLT}.

Having the cores at the end of the segment record means that the induced subgraph on the 
first $22-c$ vertices of the segment pair (the full size of a segment pair is $12+12-2=22$ 
vertices) is the same regardless of the matching we add. That is, we need to account for the 
possible matchings only when dealing with the final $c$ vertices.

Recall that possible matchings between two cores $C_x\subset S_x$ and $C_y\subset S_y$ of 
equal size $c$ are indexed by the elements of the symmetric group $S_c$. Since $c\in\{4,6\}$, 
we have $6!=720$ matchings when $Z$ is an edge and $4!=24$ matchings when $Z$ is a non-edge. 
Looking at Proposition \ref{segment pair numbers}, we see that there are significantly more 
segment pairs where $Z$ is a non-edge, so adding the matchings evens out those two cases.

To add a matching to a given segment pair, we scan a pre-computed tree of depth $c$, which at 
each level $i$ decides the neighbour in $C_x$ of the $i$th vertex $y_i$ from $C_y$. This 
allows us to add the data of $y_i$ to the Gram matrix and check semi-positive definiteness. 
This results in a very efficient enumeration algorithm. Note that we had the option of using 
symmetry of the segment pair to cut down on the number of possible segment pairs with 
matching. However, in most cases the symmetry is trivial, and so we decided against using it, 
as it would not bring us much benefit and it would require a different enumeration tree for 
each symmetry type, significantly complicating the algorithm.

As it turns out, none of the resulting segment pairs with matching (corresponding to the 
leaves of the enumeration tree) are eliminated by the semi-positive definiteness criterion. 
This is why we next add the third segment $S_z$ at Step 2. Again, we could use the symmetries 
of the segment pair $S_x\cup S_y$ and the segment $S_z$ to reduce, using Proposition 
\ref{gluing}, the number of ways of merging the first and second handles in $S_z$ with second 
handles in $S_x$ and $S_y$, respectively. However, we again decided against it, because we 
judged that it would not give us a significant decrease in runtime to justify the trade off, 
a less transparent algorithm. Hence we simply allowed all four ways of gluing the two handles 
of $S_z$ onto the second handles of $S_x$ and $S_y$. In hindsight, this may have been a 
questionable decision as the majority of the hardest cases came from symmetric configurations 
and so, in fact, it may have significantly increased the overall runtime by doing some slow 
configurations twice. 

Note that we again assume that $S_z$ does not precede $S_y$ (and hence also $S_x$) in the 
list of segments, and this allows us to avoid the most obvious overcounting.

\medskip
Once the handles of $S_z$ are identified with the corresponding handles of $S_x$ and $S_y$, 
we still need to add two matchings, between the cores in $S_x$ and $S_z$ corresponding to the 
handle $Y=S_x\cap S_z$ and between the cores in $S_y$ and $S_z$ corresponding to the handle 
$X=S_y\cap S_z$. Just like we used an enumeration tree to account for all matchings in the 
segment pair $S_x\cup S_y$, we utilise a similar idea and account for the two new matchings 
within a single precomputed tree of depth $8$ (the cardinality of $S_z\setminus(X\cup Y)$). 
Hence, at each level $i$, the tree chooses the additional neighbours of the $i$th vertex 
$z_i$ of $S_z\setminus(X\cup Y)$ in $S_x$ and $S_y$. Referring to Table \ref{quad type 
figure}, if $z_i$ is in the `none' part of the segment $S_z$ then it has no additional 
neighbours and so we can immediately add this vertex to the Gram matrix and check 
semi-positive definiteness. If $z_i$ is in the `right' part of $S_z$ then it has an 
additional neighbour in $S_y$. Symmetrically, if $z_i$ is in the `left' part then it has an 
additional neighbour in $S_x$. Finally, if $z_i$ is in the `both' part of $S_z$, it has two 
additional neighbours, one in $S_x$ and the other in $S_y$. Again, once the neighbours of 
$z_i$ are selected, we add the data of $z_i$ to the Gram matrix and check semi-positive 
definiteness.

From this discussion, it is clear that the exact structure and size of this second 
enumeration tree (we called it the \emph{big tree}, as opposed to the smaller tree we use for 
adding the matching between the cores in $S_x$ and $S_y$) depends on the quad type $(n_S,r_S,
l_S,b_S)$ of $S=S_z$. We precomputed the big trees for all quad types and then simply use 
the correct one depending on the quad type of $S_z$. Note that while the structure and the 
total size of the big tree varies from one quad type to another, the number of final 
configurations (leaves) in the big tree depends only on the types of the handles $X$ and $Y$. 
Namely, we have exactly $6!6!=720^2=518400$ leaves when both $X$ and $Y$ are edges, 
$6!4!=720\cdot 24=17280$ leaves if $Y$ is an edge and $X$ is a non-edge, and $4!4!=24^2=576$ 
leaves if both $X$ and $Y$ are non-edges. 

These numbers indicate the possible numbers of complete configurations $T=S_x\cup S_y\cup 
S_z$ for each choice of a segment pair with a matching, $S_x\cup S_z$ with the third segment 
$S_z$ already glued in over the handles. This makes for really astronomical total numbers of 
possible configurations $T$, which would not be possible to enumerate and evaluate. What 
makes it a feasible project in the end is that we add vertices one by one and the 
semi-positive definiteness criterion kicks in a very non-trivial way after just a few 
(typically four or five) vertices of $S_z\setminus(X\cup Y)$ are added. So we rarely, in 
fact, almost never, have to visit in our enumeration all leaves of the big tree. 

\medskip
We do not have the exact data which proportion of all configurations is eliminated by 
expanding $S_x\cup S_y$ to the full set $T=S_x\cup S_y\cup S_z$. By our observation, it is 
well in excess of $99\%$. However, a tiny proportion of survivors ends up being a significant 
number of full configurations $T$ which are semi-positive definite, and hence, in order to 
try and eliminate those, we need an extra step in the algorithm, adding vertices beyond $T$.

\section{Step 3: Beyond $T$}

After extensive experiments trying to find a meaningful way to add additional vertices, we 
settled on the following scheme: we add vertices from $G\setminus\hat T$, where $\hat T=Q\cup 
T$, that are adjacent to the first vertex $t$ from the handle $X=S_y\cap S_z$. This is the 
vertex number $13$ in $T$.

\subsection{Enumerating additional vertices}

Note that $t$ has no neighbours in $S_x$ and it has two neighbours in both $S_y$ and $S_z$. 
Hence, within $\hat T$, the vertex $t$ has $2+2+2=6$ neighbours, if $X$ is a non-edge, and $t$ 
has $2+1+1+1=5$ neighbours in $\hat T$, if $X$ is an edge\footnote{For the reasons that will 
be explained later, we never encounter in the actual calculation the case where $X$ is an 
edge.}. In any case, $t$ has at least $14-6=8$ neighbours in $G\setminus\hat T$, and these are 
the vertices we aim to add to our configuration. Adding eight extra vertices to $T$ brings the 
total to $38$ vertices, which significantly exceeds the dimension $34$ of $E$. Hence not only 
the Gram matrix on this larger set must be semi-positive definite, but also the rank of the 
Gram matrix cannot exceed $34$, which means that the radical must be of dimension at least 
$38-34=4$, and this is a super-strong condition. On the other hand, we do not know adjacency on 
the set of extra vertices, as we do not utilise any concept similar to segment here. We add the 
extra vertices one by one, using the restrictions that we now proceed to discuss.

\begin{lemma} \label{neighbours in T}
Suppose that $u\in G\setminus\hat T$, and $u$ is adjacent to $t$. Then $u$ has exactly two 
neighbours in $S_x$  and it has exactly one additional (other than $t$) neighbour in both $S_y$ 
and $S_z$.
\end{lemma}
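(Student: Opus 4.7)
The plan is to unpack what it means for $u$ to lie outside $\hat T = Q \cup T$, and then apply the $\mu = 2$ condition three times, once to each of the non-adjacent pairs $(u,x)$, $(u,y)$, $(u,z)$.

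First I would observe that since $G_1(x) = \{y,z\} \cup S_x$ and $u \notin \hat T$, we have $u \notin G_1(x)$, so $u$ is non-adjacent to $x$. The same argument shows $u$ is non-adjacent to $y$ and to $z$. Thus $u$ is at distance two from every vertex of $Q$.

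Next I would apply $\mu = 2$ to the pair $(u,x)$. The two common neighbours of $u$ and $x$ must lie in $G_1(x) = \{y,z\} \cup S_x$. But $u$ is not adjacent to $y$ or $z$, so both common neighbours must lie in $S_x$. Hence $u$ has exactly two neighbours in $S_x$. Running the same argument for the pair $(u,y)$, the two common neighbours of $u$ and $y$ lie in $G_1(y) = \{x,z\} \cup S_y$, and since $u$ is non-adjacent to $x$ and $z$, both lie in $S_y$. One of them is $t$, by hypothesis (recall $t \in X \subset S_y$ and $u \sim t$), so there is exactly one further neighbour in $S_y$. The pair $(u,z)$ is handled identically, using $t \in X \subset S_z$ to account for one of the two common neighbours in $S_z$.

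There is no real obstacle here: the whole argument is a direct bookkeeping of where common neighbours of $u$ and elements of $Q$ are forced to live, once we note that $u \notin \hat T$ pushes all candidates into the appropriate segment. The only thing worth double-checking is that $t$ genuinely belongs to both $S_y$ and $S_z$ (which is the very definition of $X = S_y \cap S_z$), so that the single neighbour $t$ accounts for one of the two common neighbours on the $y$-side \emph{and} one of the two on the $z$-side, leaving the claimed \emph{additional} neighbour in each of $S_y$ and $S_z$.
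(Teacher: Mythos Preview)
Your proof is correct and follows exactly the same approach as the paper, which simply remarks that the claim is immediate because $u$ is not adjacent to any vertex of $Q$ and $\mu=2$. You have carefully unpacked this one-line justification, verifying in particular that $u\notin\hat T$ forces $u\notin G_1(x)\cup G_1(y)\cup G_1(z)$ and that $t\in S_y\cap S_z$ accounts for one common neighbour on each of the $y$- and $z$-sides.
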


This is immediate since $u$ is not adjacent to any vertex in $Q$ and $\mu=2$. This means that 
$u$ has the maximum of five and the minimum of three (if $u$ is adjacent to vertices in both 
handles $Y$ and $Z$) neighbours in $T$.

We pre-compute all such possible sets of neighbours in $T$.

\begin{lemma}
In total, there are exactly $2080$ possible configurations of neighbours of $u$ in $T$ that 
satisfy Lemma $\ref{neighbours in T}$.
\end{lemma}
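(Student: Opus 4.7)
The plan is to translate Lemma~\ref{neighbours in T} (together with the constraint $t\in G_1(u)$) into a system of linear equations describing how $N:=G_1(u)\cap T$ distributes across the atoms of the natural partition of $T$, and then to enumerate the non-negative integer solutions, weighting each by the number of vertex choices it represents.

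As a preliminary step, I would check that the three handles $X=S_y\cap S_z$, $Y=S_x\cap S_z$, and $Z=S_x\cap S_y$ are pairwise disjoint: a vertex in, say, $X\cap Y$ would be a common neighbour of all three vertices of $Q$, extending $Q$ to a $4$-clique and contradicting its maximality. Since each handle has size $2$, this yields the disjoint partition
$$T=A_x\sqcup A_y\sqcup A_z\sqcup X\sqcup Y\sqcup Z,$$
where $A_w:=S_w\setminus(\text{handles of }S_w)$ has size $8$ for each $w\in\{x,y,z\}$.

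Setting $n_w:=|N\cap A_w|$ for $w\in\{x,y,z\}$, $h_X:=|N\cap(X\setminus\{t\})|\in\{0,1\}$, and letting $h_Y,h_Z\in\{0,1,2\}$ count the $N$-vertices in $Y$ and $Z$, Lemma~\ref{neighbours in T} (recalling that $t\in N\cap X$) becomes the system
\begin{align*}
n_x+h_Y+h_Z&=2,\\
n_y+h_X+h_Z&=1,\\
n_z+h_X+h_Y&=1.
\end{align*}

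The only step requiring attention is a clean case split on $h_X$. If $h_X=1$, then the last two equations force $n_y=n_z=h_Y=h_Z=0$ and then $n_x=2$, contributing $\binom{8}{2}=28$ configurations. If $h_X=0$, each of $(n_y,h_Z)$ and $(n_z,h_Y)$ has exactly two non-negative solutions summing to $1$, giving four joint subcases in which $n_x$ is determined by the first equation; the numbers of vertex choices in these subcases are $\binom{8}{2}\cdot 8\cdot 8=1792$, $8\cdot 2\cdot 8=128$, $8\cdot 2\cdot 8=128$, and $2\cdot 2=4$. Adding everything yields $28+1792+128+128+4=2080$, as claimed. Since the count is purely combinatorial, the only potential pitfall is ensuring the case analysis is exhaustive.
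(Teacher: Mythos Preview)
Your argument is correct. The disjointness of the handles, the partition of $T$, the translation of Lemma~\ref{neighbours in T} into the three linear constraints, and the case split on $h_X$ are all sound, and the arithmetic checks out to $2080$.

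The paper does not actually give a proof of this lemma: the number $2080$ is obtained there by a one-off computer enumeration (the pre-computed array \texttt{downs}), justified only by the remark that the set of possible neighbour configurations depends solely on how the three segments and their handles sit inside $T$. Your approach is genuinely different in that it replaces the machine enumeration by an explicit hand count via the atom decomposition $T=A_x\sqcup A_y\sqcup A_z\sqcup X\sqcup Y\sqcup Z$. What this buys you is transparency: one sees directly why the number is $2080$ and how it breaks into the five summands $28+1792+128+128+4$. What the paper's computational route buys is the actual list of the $2080$ subsets, which is what is needed downstream when filtering \texttt{downs} against a specific $T$; your argument certifies the count but would still need the list to be generated for the algorithm to proceed.
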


Note that this set of possible sets of neighbours of extra vertices $u$ only depends on how 
the segments are embedded in $T$, and so this calculation needs to be done only once, as its 
result, the array we call \verb|downs|, is applicable to all $T$. However, a concrete $T$ 
allows further elimination of some of these possibilities.

First of all, we note that each $u$ is uniquely identified by its neighbours in $T$. 

\begin{lemma}
If $u'\in G\setminus\hat T$ has the same neighbours in $T$ as $u$ then $u'=u$.
\end{lemma}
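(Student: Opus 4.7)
The plan is to rule out $u\ne u'$ by combining the $\lambda$ and $\mu$ constraints with the handle non-adjacency from Lemma~\ref{segnonadj}.

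Set $N:=G_1(u)\cap T=G_1(u')\cap T$. From the discussion immediately after Lemma~\ref{neighbours in T}, $|N|\in\{3,4,5\}$; in particular $|N|\ge 3$. Every vertex of $N$ is a common neighbour of $u$ and $u'$, so $|G_1(u)\cap G_1(u')|\ge 3$. By strong regularity that count equals $\lambda=3$ when $u\sim u'$ and $\mu=2$ when $u\not\sim u'$. The non-adjacent case is therefore ruled out immediately, and in the adjacent case we must have $|N|=3$ with every common neighbour of $u,u'$ lying in $T$.

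The next step is to pin down the exact shape of $N$. The value $|N|=3$ is attained only when both of $u$'s neighbours in $S_x$ lie in handles of $S_x$: one in $Z=S_x\cap S_y$, coinciding with the additional $S_y$-neighbour of $u$, and one in $Y=S_x\cap S_z$, coinciding with the additional $S_z$-neighbour of $u$. (The remaining ways to produce two coincidences among the multiset of neighbours would force a vertex into $Y\cap Z$, which is empty.) Hence $N=\{t,z^*,y^*\}$ for some $z^*\in Z$ and $y^*\in Y$.

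Now comes the contradiction. Since $Z$ and $Y$ are the two handles of the segment $S_x$, Lemma~\ref{segnonadj} gives $z^*\not\sim y^*$. Therefore $z^*$ and $y^*$ have exactly $\mu=2$ common neighbours in $G$, one of which is $x$ (as $z^*,y^*\in S_x\subseteq G_1(x)$). Thus there is at most one further common neighbour of $z^*$ and $y^*$; however, both $u$ and $u'$ are such common neighbours and neither equals $x$ (both lie outside $\hat T$), forcing $u=u'$, contrary to the hypothesis. The main — and essentially only — subtlety is that the bound $|N|\le 3$ coming from $\lambda=3$ is not in itself sufficient: one must identify the precise configuration $\{t,z^*,y^*\}$ with $z^*,y^*$ in distinct handles of $S_x$, so that Lemma~\ref{segnonadj} can convert $\mu=2$ into the required uniqueness.
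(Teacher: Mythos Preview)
Your proof is correct and follows essentially the same route as the paper: both arguments use $\mu=2$ to force $u\sim u'$, then $\lambda=3$ to force $|N|=3$ with one vertex in each handle, and finish by applying Lemma~\ref{segnonadj} to a pair of handle vertices and overcounting their common neighbours. The only cosmetic difference is the choice of pair in the last step: the paper uses $t\in X$ together with the vertex $t'\in Z$ (obtaining common neighbours $y,u,u'$), whereas you use $z^*\in Z$ and $y^*\in Y$ (obtaining common neighbours $x,u,u'$); your justification that $|N|=3$ forces exactly the configuration $\{t,z^*,y^*\}$ is also more explicit than the paper's one-line assertion.
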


\begin{proof}
Indeed, suppose that $u'\neq u$. Since they have at least three common neighbours in $T$ and 
$\mu=2$, we must have that $u$ and $u'$ are adjacent. Furthermore, since $\lm=3$, $u$ and $u'$ 
have exactly three neighbours in $T$, one in each handle. Let $t'$ be their common neighbour 
in the handle $Z=S_x\cap S_y$. Then we know from Lemma \ref{segnonadj} that $t$ and $t'$ are 
non-adjacent and, at the same time, they have three common neighbours, $y$, $u$ and $u'$. This 
is a contradiction proving the claim.
\end{proof}

Hence, each element $d$ of \verb|downs| describes a unique potential neighbour $u$ of $t$.

\subsection{Eliminating impossible $d$}

To eliminate some $d$, we first compute the demand for all pairs of vertices from $T$. That 
is, given a pair $t_i,t_j\in T$, with $i<j$, we compute how many vertices from $G\setminus\hat 
T$ should there be that are adjacent to both $t_i$ and $t_j$. Namely, we start with the total 
of $\lm=3$, if $t_i$ and $t_j$ are adjacent, and the total of $\mu=2$, if they are 
non-adjacent. Then we subtract one from this total for each common neighbour of $t_i$ and 
$t_j$ in $\hat T$. If we end up with a negative demand for some pair $t_i,t_j$ then we can, 
clearly, discard this particular configuration $T$ altogether. The diagonal entries 
in the demand matrix, corresponding to the situation $t_j=t_i$, are not used and hence 
are not computed.

Once the demand matrix is known and it does not contain negative values, we check every $d$ 
in \verb|downs| against it. If for a pair $i,j\in d$, $i<j$, we have that the demand for 
$t_i$ and $t_j$ is zero then we discard such a $d$.

If $d$ survives this check, we then check it for the following condition: for each $i\in
\{1,2,\ldots,30\}\setminus d$, we check that the number of known common neighbours between 
$t_i$ and $u$ (corresponding to $d$) does not exceed $\mu=2$. (Note that since $i\notin d$, 
$t_i$ and $u$ are not adjacent.) If such an $i$ is found then $d$ is discarded as well. We 
also compute at this stage what we call the \emph{halo} of the potential vertex $u$. By 
definition, this is the set of all $i\in\{1,2,\ldots,30\}\setminus d$ such that $t_i$ and $u$ 
have exactly $\mu=2$ known common neighbours. The halo is used at a later stage when we check 
for possible adjacency among additional vertices $u$. Note that to find the number of known 
common neighbours for $t_i$ and $u$ we simply count the number of $j\in d$ such that $M_{ij}
=\frac{2}{7}$, where $M$ is the Gram matrix of $T$, which is known at that point.

The next check for $d$ involves verification that adding $u$ to $T$ does not create a non 
semi-positive Gram matrix. First, we compute the vector $r=(r_1,r_2,\ldots,r_{30})$ of all 
values $u\cdot t_i$. Hence 
$$
r_i=
\left\{
\begin{array}{rl}
\frac{2}{7},&\mbox{if }i\in d;\\
-\frac{1}{14},&\mbox{otherwise}.
\end{array}
\right.
$$
This vector allows us to find $u\cdot w$ for every $w=\sum_{i=1}^{30}c_it_i\in W=\la T\ra$ 
by simply computing the $1\times 1$-matrix $rc^T$, where $c=(c_1,c_2,\ldots,c_{30})$.

Non semi-positive definiteness of the extended Gram matrix can manifest itself in two 
different ways. First, it arises when $rc^T\neq 0$ for some vectors $c$ such that 
$u=\sum_{i=1}^{30}c_it_i=0$. We can verify this condition as follows: while checking $T$ for 
semi-positive definiteness via the LDLT decomposition, we also compute the matrix $R=L^{-1}$, 
with the rows $R_i$ of $R$ representing an orthogonal basis in $\RR^{30}$ endowed with the 
symmetric bilinear form represented by the Gram matrix $M$. (See Appendix \ref{appendix LDLT} 
for details.) In particular, the vectors $R_i$ such that $D_{ii}:=R_iMR_i^T=0$ form a basis 
in the null space of $M$, and so we can simply check the above condition for all such 
$c=R_i$.

The second way the extended Gram matrix may become non semi-positive definite is when the 
projection $\proj_W(u)$ of $u$ to $W$ is longer that $u$, and so $u-\proj_W(u)$ has negative 
(square) length. For this check, we compute the projection matrix $P$ for $T$. (See Appendix 
\ref{projection} for the details.) Then the projection of $u$ is found as $\sum_{i=1}^{30}p_i
t_i$, where $p=rP$. Finally, we find the length of the projection as $rp^T$, and if the entry 
in this $1\times 1$-matrix exceeds $1$ (the length of $u$) then we reject $d$.

Filtering out all impossible elements of \verb|downs| via the above conditions leaves us with 
the array \verb|verts| of all $d$ that may potentially correspond to vertices in $G\setminus
\hat T$. Turning to the computational aspect of this, in most cases \verb|verts| would only 
be a small part of \verb|downs|, having no more than $150$ possible $d$. (Recall that 
\verb|downs| has size $2080$.) However, in some difficult cases we observed larger arrays 
\verb|verts| of up to $600+$ sets $d$. The checks themselves were quite quick for each $T$, 
but the later calculations could be long when \verb|verts| was large.

\subsection{Compatibility of additional vertices}

Now that the set of possible additional vertices, represented by the array \verb|verts|, has 
a more reasonable size, we can try to select from it the eight neighbours of $t$. We do the 
selection recursively. The current (incomplete) selection of vertices is represented by the
array \verb|further| and at every stage we have the current unsatisfied demand matrix and the 
current version of \verb|verts|, where the additional vertices $u$ have been checked for 
compatibility with the already selected vertices in \verb|further|. Hence we now proceed to 
describe these compatibility conditions.

If $u$ and $u'$ are two such vertices, corresponding to $d$ and $d'$, then we have that they 
must eventually form an edge or a non-edge. These two options involve different checks. 
For a potential edge, we verify that:
\begin{enumerate}
\item[(a)] the intersection $d\cap d'$ is of size at most $3$; this is because $\lm=3$;
\item[(b)] $d$ meets trivially the halo of $u'$ and, symmetrically, $d'$ meets trivially 
with the halo of $u$; and 
\item[(c)] adding both $u$ and $u'$ to $T$ does not lead to non semi-positive definiteness, 
assuming that $uu'$ is an edge.
\end{enumerate}

We need to justify (b) and explain how the check (c) is performed.

\begin{lemma}
For vertices $u,u'\in G\setminus\hat T$ with sets of neighbours in $T$ given by $d$ and $d'$, 
respectively, if $d$ meets the halo of $u'$ non-trivially then $u$ and $u'$ cannot be 
adjacent in $G$.
\end{lemma}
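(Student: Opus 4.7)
The plan is to unpack the definition of the halo and then derive a direct contradiction from $\mu=2$. Suppose, aiming at a contradiction, that $u$ and $u'$ are adjacent in $G$, and pick an index $i$ in the nonempty intersection $d\cap\mathrm{halo}(u')$.

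First I would note what membership in $d$ and in the halo of $u'$ give us. Since $i\in d$, the vertex $t_i$ is adjacent to $u$ in $G$. On the other hand, by the definition of the halo, $i\notin d'$, so $t_i$ is \emph{not} adjacent to $u'$; moreover, $t_i$ and $u'$ already have exactly two known common neighbours within $\hat T$. Because $\mu=2$, these two already account for \emph{all} common neighbours of $t_i$ and $u'$ in $G$.

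Next I would produce the forbidden third common neighbour. Under the assumption $u\sim u'$, combined with $t_i\sim u$ established above, the vertex $u$ is a common neighbour of $t_i$ and $u'$. But $u\in G\setminus\hat T$, so $u$ is not one of the two common neighbours already counted in $\hat T$. This gives $t_i$ and $u'$ at least three distinct common neighbours in $G$, contradicting $\mu=2$. Hence $u$ and $u'$ cannot be adjacent, as required.

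There is really no substantive obstacle here: the lemma is a direct consequence of the $\mu=2$ condition and the way the halo was defined to record pairs whose common-neighbour quota is already saturated inside $\hat T$. The only care needed is to check that $u$ is genuinely distinct from the two common neighbours recorded in $\hat T$, which is immediate from $u\in G\setminus\hat T$.
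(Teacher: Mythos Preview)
Your proof is correct and follows essentially the same approach as the paper: both pick $t_i$ in the halo of $u'$, note that $t_i$ already has two common neighbours with $u'$ among the $t_j$, and observe that adjacency of $u$ to both $t_i$ and $u'$ would produce a third common neighbour, contradicting $\mu=2$. The only cosmetic difference is that the paper explicitly names the two common neighbours as $t_j,t_k\in T$, whereas you refer to them as lying in $\hat T$; since $u'\in G\setminus\hat T$ has no neighbours in $Q$, this comes to the same thing.
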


\begin{proof}
Recall that the halo of $u'$ consists of all vertices $t_i$, with $i\notin d'$, such that 
$t_i$ has exactly two neighbours, $t_j$ and $t_k$, with $j,k\in d'$. If $t_i$ is adjacent to 
$u$ and $u$ is adjacent to $u'$ then $u$, $t_j$, and $t_k$ are common neighbours of $t_i$ and 
$u'$, which is a contradiction, because $t_i$ and $u'$ are non-adjacent and $\mu=2$.
\end{proof}

This shows that indeed (b) is a valid check.

As for the check in (c), we use the vectors $r$ and $r'$ corresponding to $u$ and $u'$ (see 
above) and also the corresponding vectors $p=rP$ and $p'=r'P$, which are all known at this 
point. Note that, setting $w=\proj_W(u)$, $v=w-u$, $w'=\proj_W(u')$, and $v'=w'-u'$, we 
must have that $v\cdot v'+w\cdot w'=u\cdot u'=\frac{2}{7}$, if $u$ and $u'$ are adjacent. 
Consequently, we must have that $|\frac{2}{7}-w\cdot w'|=|v\cdot v'|\leq|v||v'|$, and hence 
$(\frac{2}{7}-w\cdot w')^2\leq(v\cdot v)(w'\cdot w')=(u\cdot u-w\cdot w)(u'\cdot u'-w'\cdot 
w')=(1-w\cdot w)(1-w'\cdot w')$. This results in the condition we verify in (c):
$$
\left(\tfrac{2}{7}-r(p')^T\right)^2\leq\left(1-rp^T\right)\left(1-r'(p')^T\right).
$$
As usual, we identify the $1\times 1$ matrices here, such as, say, $r(p')^T$, with the entry 
in it. Also note that $w\cdot w'=u\cdot w'$ and so $r(p')^T$ correctly represents this value 
(and similarly for the other terms in the inequality).

The final remark about checking whether $u$ and $u'$ can form an edge is the the halo 
condition (b) is quite strong and it eliminates a lot of pairs $u$ and $u'$.

\medskip
To check whether $u$ and $u'$ can form a non-edge, we verify the following:
\begin{enumerate}
\item[(a)] the intersection $d\cap d'$ is of size at most $2$, since $\mu=2$; and
\item[(b)] adding both $u$ and $u'$ to $T$ does not lead to non semi-positive definiteness, 
assuming that $uu'$ is a non-edge.
\end{enumerate}

We do not have, unfortunately, a halo-type condition for non-edges. The condition (b) is 
verified via the inequality
$$
\left(-\tfrac{1}{14}-r(p')^T\right)^2\leq\left(1-rp^T\right)\left(1-r'(p')^T\right),
$$
which is similar to the condition (c) we had for edges.

\medskip
Finally, additional vertices $u$ and $u'$ are \emph{compartible} if they can form an edge, or 
a non-edge, or both. In most cases, both $u$ and $u'$ have very small length, which either 
eliminates the pair altogether, as non-compatible, or only one condition, for an edge or for 
a non-edge, could be satisfied. However, in difficult cases we may have many compatible 
pairs, about which we cannot decide immediately whether they form an edge or a non-edge.

\subsection{Recursion}

We have already mentioned that we build the possible sets \verb|further| of additional 
neighbours of $t$ recursively. The recursor function takes as arguments the current set of 
possible extra vertices \verb|verts| and current demand array. Recall that the demand array 
records for each pair $i,j\in\{1,2,\ldots,30\}$, $i<j$, how many additional common vertices 
of $t_i$ and $t_j$ we can still add. The current array \verb|further| is a global variable and 
it is affected (extended) by the recursor. Above we explained how to find the initial demand 
array and the initial $\verb|verts|$. Clearly, the initial \verb|further| is empty.

The recursor first checks if we have \emph{forced} vertices. These are the vertices that must 
be added if we are to satisfy the demand. For this, we compute the offer array counting the 
common neighbours of all pairs $t_i$, $t_j$ among the vertices in \verb|verts|.

If for some pair $i,j$, with $i<j$, demand exceeds offer then, clearly, demand cannot be met 
and so we exit the recursor right away, as this configuration cannot be successfully completed. 
If demand for $i,j$ is equal to offer then the only way to satisfy demand is by adding to 
\verb|further| all common neighbours of $t_i$ and $t_j$ from \verb|verts|, so these common 
neighbours are forced and we add them to the array \verb|forced|, and we do this procedure for 
all pairs $i,j$.

If this results in a non-empty array \verb|forced| then we do the following checks on it:
\begin{enumerate}
\item[(a)] the total length of \verb|further| and \verb|forced| does not exceed the number 
of additional neighbours $t$ can have (which is $8$ in the actual calculation); and
\item[(b)] the vertices in \verb|forced| are compatible with each other.
\end{enumerate}
(Note that that these vertices are compatible with all vertices from \verb|further|, as 
this holds for all vertices in the current \verb|verts|.) If either of the two checks above 
fails then the current configuration cannot be completed and we exit the recursor. 

If the set of forced vertices passes the checks then we add them to \verb|further| and 
compute the new demand matrix and the new \verb|verts| by removing from it the vertices 
that are 
\begin{enumerate}
\item[(a)] forced; or
\item[(b)] are adjacent to a pair $t_i$, $t_j$ with new demand zero; or
\item[(c)] are not compatible with some forced vertex.
\end{enumerate}
While computing the new demand, we check that it remains non-negative for all pairs $i,j$, or 
else we exit the recursor. Also, if the length of new \verb|further| is equal to $8$, the number 
of required additional neighbours of $t$, then we have arrived at one of the possible exact sets 
of additional vertices of $t$ and so we call a different function (that we describe in the next 
section) to see if this can actually lead to a graph $G$. Once the new demand and new 
\verb|verts| are computed, we call a new instance of the recursor. On return from it, we exit 
the current recursor, as nothing else can be done.

If, on the other hand, we find no forced vertices in \verb|verts| then we select $t_i\in 
T\setminus\{t\}$ so that the demand for the pair $t$ and $t_i$ is non-zero but as small as 
possible. The idea is that we must add an extra vertex satisfying this demand and the minimality 
condition hopefully means that we have a short list of possible additional vertices that we can 
use. Hence we make the list of vertices from \verb|verts| that are joint neighbours of $t$ and 
$t_i$, and we add to \verb|further| one vertex from this list in a loop. Note that when we add 
the $i$th vertex from this list, it means we have already tried all preceding vertices and so 
they can be removed from further consideration. We compute the new demand array and new 
\verb|verts|, as above. If \verb|further| has length exactly $8$, we again call the function 
deciding whether the exact set \verb|further| can lead to a graph $G$. Otherwise, we call a new 
instance of recursor. On exit from the call, we restore \verb|further| and continue with the 
loop, and when it ends we exit the current recursor, as there is nothing left for us to try.

\medskip
To summarise this section, if we have a complete set $T$ with a semi-positive definite Gram 
matrix then we recursively enumerate all possible sets of $8$ additional neighbours of the 
vertex $t=t_{13}$ from the handle $X=S_y\cap S_z$. In most cases this procedure is quite 
efficient and in a great majority of cases it does not produce any possible exact sets of 
additional neighbours of $t$, thus ruling $T$ out. However, in difficult cases, it can produce 
some exact candidates for the additional neighbours of $t$. Note that we cannot immediately check 
the extended set of vertices for semi-positive definiteness, as we may not know the edges among 
the additional vertices. So there is a further enumeration step to be done, and it is described 
in the next section.

\section{Step 4: Exact sets} \label{exact}

Suppose that we have a set $T$ with a semi-positive definite Gram matrix $M$ and a set 
\verb|further| of all additional (i.e., not contained in $T$) neighbours of $t=t_{13}$. Clearly, 
the graph $C=G_1(t)$ on the set of neighbours of $t$ should be a good cubic graph. The issue is 
that we do not know all edges in this local graph, but we do have some partial information about 
edges. First of all, we organise the vertex set $\{c_1,c_2,\ldots,c_{14}\}$ of $C$ as follows: 
we take $c_1=y$, $c_2=z$, $c_3$ and $c_4$ are the neighbours of $t$ in $S_y$, and $c_5$ and 
$c_6$ are the neighbours of $t$ in $S_z$. (Recall that the handle $X$ is a non-edge, and so our 
counting is correct.)  The remaining eight vertices $c_7,c_8,\ldots,c_{14}$ come from the array 
\verb|further|. 

We know that $y=c_1$ is adjacent to $z=c_2$, as well as $c_3$ and $c_4$, but not to any other 
vertex of $C$. Similarly, $z=c_2$ is also adjacent to $c_5$ and $c_6$, but not to any further 
vertex from $C$. Adjacency among the vertices $\{c_3,c_4,c_5,c_6\}$, which are in $T$, can be 
gleaned from the available Gram matrix $M$ of $T$, and so we know all edges there. Also known 
are all edges between the vertices $v\in\{c_3,c_4,c_5,c_6\}$ and $u\in\{c_7,c_8,\ldots,c_{14}\}$, 
because these are recorded in the element $d$ of \verb|downs| corresponding to $u$. However, for 
pairs of vertices $u,u'\in\{c_7,c_8,\ldots,c_{14}\}$, i.e., in \verb|further|, we only have 
partial information: they are compatible, which means that at least one of the two 
possibilities, an edge or a non-edge, has not been ruled out for each such pair. So the status 
of each pair $u,u'$ from \verb|further| is one of the following:
\begin{enumerate}
\item[(a)] definitely an edge;
\item[(b)] definitely a non-edge; or
\item[(c)] an edge or a non-edge.
\end{enumerate}

At this final stage, Step 4, of our enumeration algorithm we recursively go through all 
possibilities for the local graph $C$.

The preparation step for this recursion involves computing a $14\times 14$ matrix representing 
the current information about the edges of $C$, as above. Namely, for each pair $i,j$, the edge 
matrix records the current status of the pair $i,j$, according to the cases (a)-(c). We also 
compute the demand array, which, for each $i$, records how many edges the vertex $c_i$ is missing, 
and the supply array, which similarly records, for each $i$, how many $j\neq i$ are there such 
that the pair $i,j$ is recorded in the edge matrix as being case (c), i.e., $c_i$ and $c_j$ do 
not currently form an edge, but they may form an edge eventually. Note that if there is an $i$ 
such that the demand for $c_i$ is greater than supply then this clearly is an impossible situation 
and so we exit.

These three arrays, the edge matrix, demand list, and supply list serve as arguments of the Step 
4 recursor function. In this function, we first try to remove some uncertainties from the edge 
matrix, i.e., we try to transform each uncertain case (c) into one of the definitive (a) and (b). 
We can do this when one of the following conditions is met:
\begin{enumerate}
\item if the demand for some $i$ is zero (i.e., $c_i$ already has its three neighbours in $C$), 
but the supply for $i$ is not zero, we mark all undecided pairs $i,j$ as non-edges;
\item if the demand for some $c_i$ is equal to the supply then all the uncertain pairs $i,j$ are 
changed to edges;
\item for each pair $i,j$, we compute the current number $e$ of known common neighbours of $n_i$ 
and $n_j$ in $N$; then
\begin{enumerate}
\item if $e\geq 3$ then this is an impossible configuration, so we quit;
\item if $e=2$ then 
\begin{enumerate}
\item if $i,j$ is a known non-edge then this is a contradiction, since, in a good graph, two 
non-adjacent vertices can have at most one common neighbour; hence we quit;
\item if $i,j$ is currently recorded as uncertain then we change it to an edge for the same 
reason as above;
\item if $i,j$ is already known to be an edge then we make certain that $c_i$ and $c_j$ have no 
further common neighbours; namely, we go through all vertices $c_k\in C$, $k\neq i,j$, and if 
$i,k$ is an edge and $j,k$ is uncertain, we make it a non-edge; similarly, if $i,k$ is uncertain 
and $j,k$ is an edge, we make $i,k$ a non-edge;
\end{enumerate}
\item if $e=1$ then we can only force change if $i,j$ is a known non-edge; then we make sure that 
$c_i$ and $c_j$ have non further common neighbours, as above: for $k\notin\{i,j\}$, if $i,k$ is 
an edge and $j,k$ is uncertain, we make it a non-edge; similarly, if $i,k$ is uncertain and $j,k$ 
is an edge, we make $i,k$ a non-edge.
\end{enumerate}
\end{enumerate}
As we implement changes, we also update the demand and supply lists accordingly, and if supply 
is ever less than demand then we quit, as this is an impossible situation. Note that if one of 
the above checks yields a change in the edge matrix then this may have consequences for other 
vertices and hence we iterate the above checks until no further changes arise.

Now suppose we have removed as much uncertainty from the edge matrix as we could, but we can 
still find a pair $i,j$ that is uncertain. Then we try both possibilities for this pair:
\begin{enumerate}
\item we make $i,j$ an edge, update demand and supply accordingly, and call a new instance of the 
recursor;
\item on return, we make $i,j$ a non-edge, update supply and demand lists, and again call a new 
instance of the recursor;
\item on return from this second attempt, we quit, as there is nothing else we can do.
\end{enumerate}

Finally, if we managed to remove all uncertainty then $C$ is now a good graph and we know all 
edges within the set $T\cup C$. This allows us to find the Gram matrix $N$ for the projection of 
the vertex set of $C$ into the orthogonal complement $W^\perp$ of $W=\la T\ra$, which we can then 
check for semi-positive definiteness and rank. We compute this Gram matrix $N$ as follows. First 
of all, the projection of the vertex $c_i$ to $W^\perp$ is the vector $v_i=n_i-w_i$, where 
$w_i=\proj_W(c_i)$. Note that the six vertices in $C\cap\hat T$ are contained in $W$ and so they 
have zero projection to $W^\perp$. Therefore, we only need to take the remaining eight vertices 
$c_7,c_8,\ldots c_{14}$ from \verb|further|, and so $N$ is of size $8\times 8$. The entry of $N$ 
corresponding to the pair $i,j$ equals to $v_i\cdot v_j=c_i\cdot c_j-w_i\cdot w_i=c_i\cdot c_j
-w_i\cdot c_j$. Recall that the first term (minuend) here is equal to $1$, or $\tfrac{2}{7}$, or 
$-\tfrac{1}{14}$ when, correspondingly, $c_i$ and $c_j$ coincide, or they form and edge in $C$, 
or they form a non-edge. The second term (subtrahend) can be computed in the matrix form as 
$r_ip_j^T$, where the vectors $r_i$ and $p_j=r_jP$ are known, since we used them to check 
compatibility of $c_i$ and $c_j$. Thus, we have all the necessary ingredients for this calculation 
and can readily compute $N$.

Note that we need to do this calculation exceedingly rarely, so we approximate semi-positive 
definiteness in a crude way by simply checking that the determinants of the principal minors of 
$N$ are non-negative. We also compute the rank of $N$ and check that the sum of the ranks of $M$ 
(Gram matrix of $T$) and $N$ does not exceed the total embedding dimension of $34$. Since 
$|T\cup C|=38$, this latter condition is super strong and in fact it takes no survivors, and so 
all configurations are eliminated at this stage, completing the calculation. 

\section{Choosing $Q$}

We have described above a multi-stage process of eliminating possible $G=\srg(85,14,3,2)$ by 
enumerating and checking triple unions $T=S_x\cup S_y\cup S_z$ or, in harder cases, $T\cup C$, 
where $C=G_1(t)$ and $t=t_{13}\in T$. Note that we start it all with a $3$-clique $Q=\{x,y,z\}$, 
whose existence is guaranteed. In this short section we describe an improvement to our algorithm, 
whereby we select $Q$ in a controlled way and this results in disappearing of a significant number 
of segment triples we need to consider. 

Recall that $Q$ can be chosen for any $x\in G$ by selecting an edge $yz$ in $G_1(x)$ that is not 
contained within the good graph $G_1(x)$ in a larger clique. For each of our $39$ types of good 
graphs, $H$, we pre-select a \emph{favourite} edge $yz$ in $H$, not contained in a larger clique. 
We will call the segment $S=H\setminus yz$ the \emph{favourite segment} for the good graph $H$.

\begin{proposition} \label{choose Q}
The maximal $3$-clique $Q=\{x,y,z\}$ in $G$ can be chosen so that at least one of the following 
holds:
\begin{enumerate}
\item[(a)] $yz$ is the favourite edge in $G_1(x)$; or
\item[(b)] $xz$ is the favourite edge in $G_1(y)$; or
\item[(c)] $xy$ is the favourite edge in $G_1(z)$.
\end{enumerate}
That is, in the triple of segments $\{S_x,S_y,S_z\}$ at least one segment is favourite for its 
good graph.
\end{proposition}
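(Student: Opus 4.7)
The plan is to construct a suitable $Q$ directly, starting from an arbitrary vertex. Pick any $x \in G$. By the results of Section 3, its local graph $G_1(x)$ is isomorphic to one of the $39$ good cubic graphs $H$, and by the description preceding the proposition, a favourite edge has been pre-selected in $H$ as an edge not contained in any triangle of $H$. Transporting this favourite edge through any isomorphism $G_1(x)\to H$, we obtain an edge $yz$ in $G_1(x)$ that is favourite for $G_1(x)$ and, in particular, is not contained in any triangle of $G_1(x)$.

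Next, I would verify that $Q:=\{x,y,z\}$ is a maximal $3$-clique in $G$. It is visibly a $3$-clique, since $x\sim y$, $x\sim z$, and $y\sim z$. For maximality, suppose some $w\in G$ extends $Q$ to a $4$-clique. Then $w\sim x$ forces $w\in G_1(x)$, while $w\sim y$ and $w\sim z$ make $\{w,y,z\}$ a triangle in $G_1(x)$ containing the edge $yz$. This contradicts the fact that $yz$ was selected as a favourite (hence triangle-free) edge of $G_1(x)$. Hence no such $w$ exists, and $Q$ is maximal.

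Finally, since $yz$ is by construction the favourite edge of $G_1(x)$, the segment $S_x=G_1(x)\setminus yz$ is the favourite segment for its good graph, establishing condition (a) of the proposition. Note that we do not need to say anything about $S_y$ or $S_z$, because the proposition only requires one of (a), (b), (c) to hold.

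There is no real obstacle in this argument: the whole point of the ``favourite edge'' mechanism was to single out, inside each good graph $H$, an edge whose removal yields a maximal $3$-clique in $G$, so the proposition reduces to observing that any starting vertex $x$ already produces a witnessing $Q$. The substantive content of this section lies not in the proposition itself but in the algorithmic payoff noted in the introduction to the section, namely that by always insisting on this condition when enumerating triples $(S_x,S_y,S_z)$ we can discard any triple in which none of the three segments is favourite.
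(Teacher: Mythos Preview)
Your proof is correct and follows exactly the same approach as the paper, which simply declares ``the proof is immediate'' and notes that one could have claimed option (a) alone; you have spelled out those immediate details carefully, including the maximality check. The paper's only additional remark is an explanation of why the symmetric disjunction (a)--(c) is stated rather than just (a): it is needed to accommodate the later reordering of $x,y,z$ so that the segments appear in the prescribed list order, a point you also touch on in your closing paragraph.
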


The proof is immediate, and in fact, we could have claimed just one of the three options. However, 
the above symmetric form is needed because we only consider ordered triples of segments $S_x$, 
$S_y$, and $S_z$. Namely, we assume that $S_y$ does not precede $S_x$ in the list of segments and, 
similarly, $S_z$ does not precede $S_y$. If we simply select $yz$ to be favourite in $G_1(x)$ then 
we cannot be sure that $S_y$ and $S_z$ do not precede $S_x$. However, we can, clearly, change the 
order of vertices in $Q$, so that the order of three segments in $T$ is the correct one, and 
manifestly, the symmetric condition from Proposition \ref{choose Q} is then maintained.

\medskip
How do we select the favourite edge $yz$ in each good graph $H$? Our preference is for an edge such 
that both handles in the segment $S=H\setminus yz$ are non-edges (segment type $(4,4)$). In a small 
number of good graphs, such a choice is impossible, in which case we select the favourite edge $yz$ 
so that the first handle in $S$ is an edge and the second handle is a non-edge (segment type 
$(6,4)$) and this is always possible. So we never need to select a segment type $(6,6)$ as our 
favourite.

The consequence of such a choice is that at least one handle in $T$ is guaranteed to be a non-edge, 
and then, because of our ordering of segments, where the segments of type $(6,6)$ precede the 
segments of type $(6,4)$, which in turn precede the segments of type $(4,4)$, we can be sure that 
the handle $X=S_y\cap S_z$ is definitely a non-edge. The advantage of this is that we never 
encounter the largest possible big enumeration trees and also this guarantees that our count of 
$8$ additional vertices for $t\in X$ is correct.

Our approach with favourite edges also eliminates all triples of segments, where none of the 
segments is favourite. Overall, this improvement to the enumeration algorithm shaves off, by our 
estimate, close to two orders of magnitude from the total run time, and hence it contributes 
significantly to making the enumeration feasible.

\section{Conclusion}

We do not include in this paper the full enumeration code we produced, as it is quite long. It can 
be found on GitHub \cite{code}. We ran it in GAP on $96$ cores in four servers in the School of 
Mathematics at the University of Birmingham continuously for over a year from November 2023 to 
January 2025. Individual cases of segment pairs took anywhere from several second to several months 
on a single core. The longest $21$ cases had to be split up further between many cores so they could 
be completed. This final calculation used a slight modification of the same code. The longest of the 
$21$ cases took about a month on $32$ cores.

As we hopefully already made clear, none of the configurations $T$ or $T\cup C$ survived the 
complete checks, and this means that $\srg(85,14,3,2)$ does not exist.

\medskip
Can this method be generalised and used to study other unresolved case of strongly regular graphs? 
This remains to be seen. For it to be successful, we need to have a relatively low embedding 
dimension and at the same time a substantial part of the graph needs to be tight enough so it could 
be enumerated within a reasonable amount of time. One possible candidate is Conway's 
$\srg(99,14,1,2)$\footnote{Apparently, John Conway was interested in this set of parameters and he 
even offered a monetary reward to anyone who could enumerate this case of strongly regular graphs.}.

\appendix

\section{Enumeration trees}

In this paper, we provide a rather detailed description of the enumeration code we used. This is 
because the efficient realisation of the huge enumeration is what makes this entire project 
feasible. In this appendix we describe the enumeration trees used in the algorithm. Recall that it 
consists of four steps. and the enumeration trees are used for the first two. At Step 1, we use 
an enumeration tree to go through all possible ways to attach a matching to a pair of segments 
$S_x\cup S_y$ joined at the handle $Z=S_x\cap S_y$. The matching we need to add is between the cores 
$C_x$ in $S_x$ and $C_y$ in $S_y$ corresponding to the handle $Z$.

We have two cases: (a) if $Z$ is an edge then $|C_x|=|C_y|=6$; and if $Z$ is a non-edge 
then $|C_x|=|C_y|=4$. Correspondingly, we have two trees and we refer to them as the 
\emph{small trees}.

Let $n\in\{4,6\}$ be the size of the cores. The record at the node of a small tree has 
the following structure.
$$
\renewcommand\arraystretch{1.5}
\begin{array}{|r|l|}
\hline
\mbox{Level}&k\\
\hline
\mbox{Neighbour}&m\\
\hline
\mbox{Brother}&bro\\
\hline
\mbox{Son}&son\\
\hline
\end{array}
$$
The level $k\in\{1,2,\ldots,n\}$ here refers here both to the depth of this node in the tree and the 
number of the vertex in $C_y$ for which we now need to choose a neighbour in $C_x$. When we operate 
with this node record, the neighbours of the first $k-1$ vertices of $C_y$ have already been chosen 
at the lower levels of the tree and so the current neighbour $m$ should be different from all of 
those earlier neighbours.

The brother and son entries specify the tree structure: the brother entry refers to the next node 
with the same parent (and hence also at the same level). If the current node is the last one for its 
parent then the brother entry is set to zero. The son entry refers to the first descendent node at 
the level $k+1$ and it is set to zero if the current node is at the deepest level $k=n$. 

Clearly, the leaves of this tree correspond to complete matchings and so there are exactly $n!\in
\{24,720\}$ of them. Using this tree structure, we have a simple code that allows us to enumerate 
all matchings while iteratively computing the necessary data, such as the LDLT decomposition of the 
partial Gram matrix. (See Appendix \ref{appendix LDLT}.)

\medskip
We also use an enumeration tree at Step 2, where we have a segment pair $S_x\cup S_y$, complete with 
a matching as above, and the third segment $S_z$, already glued to $S_x\cup S_y$ via the handles 
$Y=S_x\cap S_z$ and $X=S_y\cap S_z$. At this point we need to add matchings between the cores in 
$S_z$ and $S_x$ corresponding to the handle $Y$ and between the cores in $S_z$ and $S_y$ 
corresponding to the handle $X$. 

The structure of the \emph{big tree}, which allows us to to enumerate both matchings at the same 
time, depends on the quad type of the segment $S=S_z$ (see Table \ref{quad type figure} and the 
discussion there). We note that the handles $Y$ and $X$ are second handles in $S_x$ and $S_y$ and 
they are the first and second handles in $S_y$, which is opposite to the order in Table \ref{quad 
type figure}.

The node record of a big tree is as follows.
$$
\renewcommand\arraystretch{1.5}
\begin{array}{|r|l|}
\hline
\mbox{Level}&k\\
\hline
\mbox{Left}&l\\
\hline
\mbox{Right}&r\\
\hline
\mbox{Brother}&bro\\
\hline
\mbox{Son}&son\\
\hline
\end{array}
$$
Here $k$ refers to the depth of the node in the tree and at the same time it refers to the number 
of the vertex in the segment $S\setminus(X\cup Y)$, where we disregard the two handles because they 
are contained in $S_x\cup S_y$. Hence the total depth of a big tree is $12-4=8$, regardless of quad 
type. Recall from Table \ref{quad type figure} that $S\setminus(X\cup Y)$ consists of four groups. 
Depending on the group, a vertex can have no new neighbours (`none'), only a neighbour in $S_y$ 
(`right'), only a neighbour in $S_x$ (`left'), or finally a neighbour in $S_x$ and a neighbour in 
$S_y$ (`both'). Clearly, here `left' refers to $S_x$ and `right' refers to $S_y$. The union of the 
`right' and `both' groups is the core of $S$ with respect to the second handle of $S$ (currently, it 
is $X=S_y\cap S_z$) and, similarly, the union of the `left' and `both' groups constitute the core of 
$S$ with respect to its first handle (currently, $Y=S_x\cap S_z$). Hence for each node of the big 
tree the entry $l$ refers to the neighbour of this vertex in the corresponding core in $S_x$ (or 
zero, if there is no neighbour in $S_x$) and, symmetrically, $r$ refers to the neighbour in the core 
in $S_y$ (or again zero if there is no neighbour there). The brother and son entries specify the tree 
structure, and this is similar to what we described for small trees.

These trees are substantially bigger (hence the name). The number of leaves (nodes at the bottom 
level $8$) is $(6!)^2=720^2=518400$ for segments of type $(6,6)$, it is $6!\cdot 4!=720\cdot 24
=17280$ for segments of type $(6,4)$, and it is $(4!)^2=24^2=576$ for segments of type $(4,4)$. So it 
is really fortunate that we never need to deal with segments $S=S_z$ of type $(6,6)$, as the handle 
$X=S_y\cap S_x$ is always a non-edge in the final enumeration. The total size of the tree varies 
depending on the exact quad type.

Again, this convenient tree structure allows for a rather uncomplicated enumeration code allowing us 
to consider all possible matchings between $S_z$ and the other two segments in a single loop.

\section{Implementation of the LDLT algorithm} \label{appendix LDLT}

In this appendix, we provide details of the implementation of the LDLT algorithm. Generally, the 
purpose of this algorithm is to decompose a symmetric matrix $A$ as a product $A=LDL^T$, where $L$ is 
a lower unitriangular matrix and $D$ is diagonal. We use this to decide whether $A$ is semi-positive 
definite, namely, this is so when all (diagonal) entries of $D$ are non-negative. We apply this to 
the Gram matrix $A=M$ of the given set of vectors $T=S_x\cup S_y\cup S_z$, which must be 
semi-positive definite, or else we can eliminate this particular configuration. 

We use the iterative version of the LDLP algorithm\footnote{We developed our iterative version 
starting from the code kindly given to us by M. Whybrow.}, that is, we start with the empty set and 
then extend it by one vertex at a time. As we do it, we extend the current matrices $L$ and $D$ 
accordingly, by one dimension. Hence, at the moment when we deal with $k$ vectors from $T$, our 
matrices $L$ and $D$ are of size $k\times k$ and $LDL^T$ coincides with the principal $k\times k$ 
minor of $M$.

Whenever we discover that $M$ is not semi-positive definite, we interrupt right away and switch to 
the next configuration $T$. On the other hand, if $M$ turns out to be semi-positive definite, the 
general algorithm involves a further step, adding further vertices to the $30$ vertices of $T$. At 
this step, we use a different technique based on the projection map to the subspace spanned by $T$. 
The matrix $R=L^{-1}$ is needed to determine this map and we compute $R$ iteratively alongside $L$ 
and $D$.

Now that we explained what we are doing in this algorithm, we are ready to present the code.
\begin{verbatim}
#
# extending the LDLT decomposition by one dimension
# if semi positive definite
#
# based on Madeleine Whybrow's code
#

M:=List([1..30],i->List([1..30],j->0));
L:=List([1..40],i->List([1..30],j->0));
D:=[];

R:=List([1..30],i->List([1..30],j->0));
Id:=IdentityMat(30);

AddOne:=function(r)
  local n,i,sum,j;
  n:=Length(r);

  for i in [1..n] do
    sum:=0;
    for j in [1..i-1] do
      sum:=sum+L[n][j]*L[i][j]*D[j];
    od;
    if i<n then
      if D[i]=0 then
        if r[i]=sum then
          L[n][i]:=0;
        else
          return false;
        fi;
      else
        L[n][i]:=(r[i]-sum)/D[i];
      fi;
    else
      L[n][n]:=1;
      D[n]:=r[n]-sum;
      if D[n]<0 then
        return false;
      fi;
    fi;
  od;

  for i in [1..n] do
    M[n][i]:=r[i];
    M[i][n]:=r[i];
  od;
  R[n]:=Id[n];
  for j in [1..n-1] do
    R[n]:=R[n]-L[n][j]*R[j];
  od;

  return true;
end;
\end{verbatim}
Note that we build the Gram matrix $M$ alongside $L$, $D$, and $R=L^{-1}$, as we add each time the 
new row $r$ of $M$, which serves as the input to the function \verb|AddOne| and which also provides 
the next size $n$ of $L$, $D$, $M$, and $R$. Also note that we only keep the diagonal entries of $D$, 
so this is a $1$-dimensional array in the code. We treat all the outputs $L$, $D$, $R$, and $M$ as 
global variables, as we want to have an easy access to them from our main enumeration code. It also 
saves a bit of time as we do not pass them back and forth as arguments.

The function \verb|AddOne| returns true if the extended matrix $M$ is semi-positive definite and it 
returns false otherwise. Note that the latter can happen in two different ways. First, as we 
discussed above, the new entry in $D$ may be negative, which clearly means that $M$ is not 
semi-positive definite. The second possibility amounts to the algorithm being unable to construct the 
extended $L$ and $D$. We now show that this may only happen when $M$ is not semi-positive definite, 
and so we get our answer anyway.

Note that in this lemma we again treat $D$ as a diagonal matrix.

\begin{lemma}\label{LDLT non-SPD}
If $D_{ii}=0$, for $1\leq i<n$, and $r_i\not=\sum_{j=1}^{i-1}L_{kj}L_{ij}D_{jj}$ then $M$ 
is not semi-positive definite.
\end{lemma}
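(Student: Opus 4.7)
The plan is to argue by contradiction. Assume that the $n\times n$ symmetric matrix $\hat M$ obtained from the current matrix $M$ by appending the new row/column $r$ with diagonal entry $r_n$ is semi-positive definite, and derive a contradiction from the two hypotheses $D_{ii}=0$ and $r_i\neq \sum_{j=1}^{i-1}L_{nj}L_{ij}D_{jj}$ (reading the $L_{kj}$ in the statement as the $L_{nj}$ appearing in the code).

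First I would exploit the $i$-th row $R_i$ of $R=L^{-1}$, which is available because the algorithm has succeeded up to this point. From $RMR^T=D$ and $D_{ii}=0$ we get $R_iMR_i^T=0$; since $M$ is semi-positive definite (all preceding diagonal entries of $D$ are non-negative), the standard fact that a PSD symmetric form vanishes on a vector $v$ if and only if $Mv=0$ forces $MR_i^T=0$. I would then consider the padded vector $v=(R_{i,1},\ldots,R_{i,n-1},0)^T\in\RR^n$, for which $v^T\hat M v=R_iMR_i^T=D_{ii}=0$. Under the assumption that $\hat M$ is PSD, the same standard fact applied to $\hat M$ forces $\hat Mv=0$.

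The heart of the proof is identifying the $n$-th coordinate of $\hat Mv$. The first $n-1$ coordinates vanish automatically (they equal the first $n-1$ coordinates of $MR_i^T$), while the last one is $\sum_{k=1}^{n-1}r_kR_{i,k}$, and the key identity to establish is
$$\sum_{k=1}^{n-1}r_kR_{i,k}=r_i-\sum_{j=1}^{i-1}L_{nj}L_{ij}D_{jj}.$$
For this, I would set $\tilde S_k:=r_k-\sum_{j=1}^{k-1}L_{nj}L_{kj}D_{jj}$ for $k\le i$, and observe that for every $k<i$ the algorithm has already completed step $k$ without returning \texttt{false}: either $D_{kk}\neq 0$, so $L_{nk}=\tilde S_k/D_{kk}$, or $D_{kk}=0$, in which case the algorithm forces $\tilde S_k=0$ and sets $L_{nk}=0$; in either case $L_{nk}D_{kk}=\tilde S_k$. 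Substituting this into the definition of $\tilde S_k$ and using $L_{kk}=1$ rearranges to $r_k=\sum_{j=1}^{k}L_{kj}\tilde S_j$ for all $k\le i$. This is a lower unitriangular linear system whose inversion via $R=L^{-1}$ yields $\tilde S_i=\sum_{j=1}^{i}R_{ij}r_j$, which equals $\sum_{k=1}^{n-1}r_kR_{i,k}$ since $R$ is lower unitriangular. Because $\tilde S_i$ is precisely the quantity $r_i-\sum_{j=1}^{i-1}L_{nj}L_{ij}D_{jj}$, assumed nonzero, this contradicts $\hat Mv=0$ and the lemma follows. The only real obstacle is the bookkeeping for this identity; once it is in place, the PSD argument is essentially automatic.
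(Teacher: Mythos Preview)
Your proof is correct and follows essentially the same approach as the paper's: both hinge on the fact that, under semi-positive definiteness, the vector $u_i=R_i$ with $D_{ii}=(u_i,u_i)=0$ lies in the radical and must therefore be orthogonal to the new vector $e_n$, and both compute this inner product as $r_i-\sum_{j=1}^{i-1}L_{nj}L_{ij}D_{jj}$. The paper phrases this via Gram--Schmidt and the bilinear form $(\cdot,\cdot)$, computing $(e_n,u_i)$ directly from $u_i=e_i-\sum_{j<i}L_{ij}u_j$ and the identity $(e_n,u_j)=L_{nj}D_{jj}$; you reach the same identity through the matrix relation $r=L\tilde S$ and its inversion by $R$, which is a slightly longer but equally valid route.
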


\begin{proof}
We start by reviewing the meaning of the matrices $L$ and $D$. Recall that $M$ is the Gram matrix of 
a subset of $T$ with respect to the dot product, but for extra generality, we view it simply as the 
Gram matrix of the standard basis $e_1,e_2,\ldots,e_n$ in $\RR^n$ with respect to a symmetric 
bilinear form $(\cdot,\cdot)$. Consider the linear map $\psi:\RR^n\to W=\la T\ra$ sending $e_i$ to 
$t_i$. Then the required symmetric bilinear form on $\RR^n$ is defined by $(e,f):=\psi(e)\cdot
\psi(f)$ for $e,f\in\RR^n$. Clearly, $M$ is the Gram matrix of $(\cdot,\cdot)$ with respect to the 
standard basis $e_1,e_2,\ldots,e_n$.

We now apply the Gram-Schmidt orthogonalisation process to the form $(\cdot,\cdot)$ to find an 
orthogonal basis $u_1,u_2,\ldots,u_n$ in $\RR^n$. Namely, $u_1=e_1$ and, inductively, 
$$u_i=e_i-\sum_{j=1}^{i-1}L_{ij}u_j,$$
for $i=1,2,\ldots,n$. Here 
$$
L_{ij}=\left\{
\begin{array}{cl}
0,&\mbox{if }(u_j,u_j)=0;\\
\frac{(e_i,u_j)}{(u_j,u_j)},&\mbox{otherwise}.\\
\end{array}
\right.
$$
Consequently, $e_i=\sum_{j=1}^iL_{ij}u_j$, where $L_{ii}=1$. This clarifies the meaning of the lower 
triangular matrix $L$; the diagonal entries from $D$ are given simply by $D_{ii}=(u_i,u_i)$ for all 
$i$. 

The basis $u_1,u_2,\ldots,u_n$ is indeed orthogonal provided that the form $(\cdot,\cdot)$ is positive 
definite or semi-positive definite. Under this condition, if $D_{ii}=(u_i,u_i)=0$ then the vector 
$u_i$ is in the radical of the form, i.e., it is orthogonal to the entire $\RR^n$. In particular, for 
$k>i$, we have $0=(e_k,u_i)=(e_k,e_i-\sum_{j=1}^{i-1}L_{ij}u_j)=(e_k,e_i)-\sum_{j=1}^{i-1}L_{ij}(e_k,
u_j)=M_{ki}-\sum_{j=1}^{i-1}L_{ji}L_{kj}D_{jj}$, since $(e_k,u_j)=L_{kj}D_{jj}$ for all $j\leq k$. 
This yields $M_{ki}=\sum_{j=1}^{i-1}L_{ji}L_{kj}D_{jj}$, and the contradiction shows that the form is 
not semi-positive definite. 

It remains to notice that when this is first encountered we have $n=k$ and $M_{ki}=r_i$.
\end{proof}

Hence indeed our function \verb|AddOne| returns false exactly when the extended Gram matrix $M$ is not 
semi-positive definite.

We would like to retain the map $\psi:\RR^n\to\la t_1,t_2,\ldots,t_n\ra$ introduced in this proof to 
be used elsewhere in the paper. Typically, it will be with the full set $T$, that is, with $n=30$. 
Then $u\in\RR^{30}$ can be viewed simply as the coefficient vector of $\psi(u)$ with respect to the 
spanning set $T=\{t_1,t_2,\ldots,t_{30}\}$ of $W=\la T\ra$. Since the dot product is positive definite 
on $W$, the vectors $u_i$ with $D_{ii}=(u_i,u_i)=0$ are in the kernel of $\psi$ and, in fact, such 
vectors $u_i$ form a basis of $\ker\psi$. On the other hand, the vectors $\psi(u_i)$, where 
$D_{ii}\neq 0$, form an orthogonal basis of $W$.

The final comment in this section concerns the meaning of the matrix $R=L^{-1}$. Note that we have 
$e_i=\sum_{j=1}^iL_{ij}u_j=\sum_{j=1}^nL_{ij}u_j$, since $L$ is lower triangular. This means that $L$ 
is the transition matrix from the orthogonal basis $\{u_1,u_2,\ldots,u_n\}$ to the standard basis 
$\{e_1,e_2,\ldots,e_n\}$ of $\RR^n$. Correspondingly, $R=L^{-1}$ is the transition matrix from the 
standard basis to the orthogonal basis $\{u_1,u_2,\ldots,u_n\}$. In other words, the row $R_i$ of $R$ 
provides the coefficients of $u_i$ with respect to the standard basis, or in simpler terms, 
$R_i=u_i$. 

\section{Projection matrix} \label{projection}

If the Gram matrix of the full set $T$ happens to be semi-positive definite then we have to consider 
vertices beyond $T$ and, at this stage, our strategy of checking semi-positive definiteness 
iteratively is not as effective, because we are not working with a pre-selected segment and, 
consequently, the graph structure on the additional set of vertices is not known. Hence we need 
another approach, and it involves computing the orthogonal projection of the additional vertices to 
the subspace $W$ spanned by $T$. For a vertex $u$, we will have the list of its neighbours in $T$ and 
hence we can form the vector $r=(r_1,r_2,\ldots,r_n)$, where $n=|T|=30$ and $r_i=u\cdot t_i$ is the 
value of the dot product between $u$ and the $i$th vertex $t_i\in T$. Hence
$$
\renewcommand{\arraystretch}{1.5}
r_i=\left\{
  \begin{array}{rl}
     \frac{2}{7},&\text{if }u\sim t_i,\\
     -\frac{1}{14},&\text{otherwise}
  \end{array}
\right.
$$
We need to find a matrix $P$ such that $p=rP$ is the list of coefficients of the projection 
$\proj_W(u)$ of $u$ to $W$ with respect to the spanning set $T=\{t_1,t_2,\ldots,t_n\}$. (That is, 
$\proj_W(u)=\psi(p)$, where $\psi:\RR^n\to W$ is the linear map introduced in Appendix \ref{appendix 
LDLT}.) We need to make two comments. First, since $T$ may in some cases be linearly dependent, the 
vector $p$ is in general not unique. Secondly, for the same reason, some vectors $r$ need to be 
eliminated because they lead to non semi-positive definiteness of the inner product on the expanded 
space $\la T\cup\{u\}\ra$. This happens when the entries in $r$ yield non-orthogonality of $u$ to some 
linear combinations of the vectors $t_i$ that have zero length. 

Here is our function:
\begin{verbatim}
# Vertex projection

P:=List([1..30],i->List([1..30],j->0));

ComputeProjMat:=function()
  local i;
  P:=0*IdentityMat(30);
  
  for i in [1..30] do
    if D[i]<>0 then
      P:=P+TransposedMat([R[i]])*[R[i]]/D[i];
    fi;
  od;
  
end;
\end{verbatim}
Note that we again treat the output, the projection matrix $P$, as a global variable, because we want 
to have easy access to it from the main code.

Here is the lemma that justifies our method of computing $P$. Recall that when we need $P$ we have 
already determined the matrices $L$ (strictly lower triangular) and $D$ (diagonal), such that $M=LDL^T$ 
is the Gram matrix of the set $T$. Furthermore, we also have $R=L^{-1}$. Let $u_i=R_i$ be the $i$th row 
of the matrix $R$. Recall that $u_1,u_2,\ldots,u_n$ is the orthogonal basis of $\RR^n$ with respect to 
the form $(\cdot,\cdot)$ with the Gram matrix $M$.

For generality, we allow an arbitrary $n=|T|$, but of course, in our application $n=30$. 
Correspondingly, all matrices are $n\times n$. Let $N=\left\{i\in\{1,2,\ldots,n\}\mid D_{ii}\neq 
0\right\}$.

\begin{lemma}\label{ProjVector}
For a vertex $u\notin T$ identified by the vector $r$, with $r_i=u\cdot t_i$, $i=1,2,\ldots,n$, the 
projection of $u$ onto $W=\la T\ra=\la t_1,t_2,\ldots,t_n\ra$ coincides with $w=\sum_{i=1}^n p_it_i$, 
where $p=(p_1,p_2,\ldots,p_n)=rP$ and 
$$
P=\sum_{i\in N}\frac{1}{D_{ii}}R_i^T R_i.
$$
\end{lemma}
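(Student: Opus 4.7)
The plan is to leverage the orthogonal basis already constructed in Appendix \ref{appendix LDLT}. Recall that the rows $R_i$ of $R=L^{-1}$ coincide with the vectors $u_i$ produced by Gram-Schmidt orthogonalisation of $e_1,\ldots,e_n$ with respect to the bilinear form $(\cdot,\cdot)$ whose Gram matrix is $M$. Under the linear map $\psi:\RR^n\to W$ sending $e_i\mapsto t_i$, the vectors $u_i$ with $i\in N$ (so $D_{ii}=(u_i,u_i)\neq 0$) map to an orthogonal basis of $W=\la T\ra$, while those with $i\notin N$ lie in $\ker\psi$, by the discussion at the end of Appendix \ref{appendix LDLT}.

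First, I would expand $\proj_W(u)$ in the orthogonal basis $\{\psi(u_i):i\in N\}$ of $W$ using the standard Fourier formula,
$$\proj_W(u)=\sum_{i\in N}\frac{u\cdot\psi(u_i)}{\psi(u_i)\cdot\psi(u_i)}\,\psi(u_i).$$
Since $\psi(u_i)\cdot\psi(u_j)=(u_i,u_j)$ (recall $\psi$ is an isometry onto its image), the denominator equals $D_{ii}$. For the numerator, $\psi(u_i)=\sum_j R_{ij}t_j$ gives $u\cdot\psi(u_i)=\sum_j R_{ij}r_j=rR_i^T$.

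Second, I would substitute $\psi(u_i)=\sum_j R_{ij}t_j$ once more inside the sum and collect the coefficient of $t_j$. This produces
$$\proj_W(u)=\sum_{j=1}^n\left(\sum_{i\in N}\tfrac{1}{D_{ii}}(rR_i^T)\,R_{ij}\right)t_j,$$
which exhibits a valid coefficient vector $p$ with $p_j=\bigl(r\sum_{i\in N}\tfrac{1}{D_{ii}}R_i^TR_i\bigr)_j=(rP)_j$, matching the claimed formula for $P$. As a sanity check, one verifies directly from the Fourier decomposition that $u-\psi(p)$ is orthogonal to each $\psi(u_j)$, $j\in N$, hence to all of $W$, so $\psi(p)$ is indeed the orthogonal projection.

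The main obstacle is essentially bookkeeping: keeping clear whether each $R_i$ is treated as a row or a column, and lining up the scalars $D_{ii}$ correctly. One conceptual subtlety worth flagging is non-uniqueness: when $T$ is linearly dependent, the coefficient vector $p$ representing $\proj_W(u)$ is not unique, so the lemma is really producing one valid $p$, while $\psi(p)$ itself is of course unique. This also connects to the earlier remark that some input vectors $r$ must be discarded, namely those failing $rR_i^T=0$ for all $i\notin N$, because for such $r$ no genuine vector $u\in E$ can realise the prescribed dot-product profile against $T$.
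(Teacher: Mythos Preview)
Your proof is correct and follows essentially the same approach as the paper: both use the orthogonal basis $\{u_i=R_i\}$ from the Gram--Schmidt process and the Fourier-type expansion with denominators $D_{ii}$. The only cosmetic difference is that you expand $\proj_W(u)$ directly in the orthogonal basis $\{\psi(u_i):i\in N\}$ of $W$, whereas the paper characterises $p$ by the equation $pM=r$ in $\RR^n$ and then expands $p$ in the basis $\{u_i:i\in N\}$; the resulting computation is the same.
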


\begin{proof}
The projection vector $w$ is identified by the property that $w\cdot v=u\cdot v$ for all $v\in W$. In 
particular, $w\cdot t_i=u\cdot t_i=r_i$, for $i=1,2,\ldots,n$. Transferring this into $\RR^n$, we are 
looking for a row vector $p$ (coefficients of $w$ with respect to the set $T$) such that $pM=r$. If $M$ 
is not positive definite, the set $T$ is linearly dependent, and so such a vector $p$ is not unique, 
but it is unique if we select it in the subspace $\la u_i\mid i\in N\ra$, which is a complement to the 
radical of the form $(\cdot,\cdot)$. We claim that the formulae in the lemma give us exactly such a 
vector.

Indeed, suppose that $p$ is a row vector such that $pM=r$ and $p$ is a linear combination of the vectors 
$u_i$, $i\in N$. Using that $u_i=R_i$, for all $i$, we get
\begin{align*} 
\displaystyle
p&=\sum_{i\in N}\frac{(p,u_i)}{(u_i,u_i)}u_i\\
 &=\sum_{i\in N}\frac{(p,R_i)}{(R_i,R_i)}R_i\\
 &=\sum_{i\in N}\frac{(p,R_i)}{D_{ii}}R_i\\
 &=\sum_{i\in N}\frac{1}{D_{ii}}(pMR_i^T)R_i\\
 &=pM\sum_{i\in N}\frac{1}{D_{ii}}R_i^T R_i\\
 &=r\sum_{i\in N}\frac{1}{D_{ii}}R_i^T R_i.
\end{align*}
Note that, in line 4 of this calculation, the product $pMR_i^T$ is a $1\times 1$ matrix with the entry 
equal to $(p,R_i)$, and so we have a four-term matrix product here, which allows us to use associativity 
and distributivity in the following line.
\end{proof}

Thus, we have the correct formula for the projection matrix.

\end{document}